\providecommand{\U}[1]{\protect\rule{.1in}{.1in}}
\newtheorem{algorithm}{Algorithm}
\newtheorem{condition}{Condition}
\journalname{Numerical Algorithms}
\begin{document}

\title{Family Constraining of Iterative Algorithms}
\author{Yair Censor         \and
        Ioana Pantelimon 		\and
        Constantin Popa
}

\institute{Y. Censor \at
              Department of Mathematics, University of Haifa, Mt.\ Carmel, Haifa 3190501, Israel \\
              \email{yair@math.haifa.ac.il}          
           \and
           I. Pantelimon \at
              Faculty of Mathematics and Computer Science, Ovidius University, Blvd. Mamaia 124, Constanta 900527, Romania\\
              \email{ipomparau@univ-ovidius.ro} 
           \and
           C. Popa \at
              Faculty of Mathematics and Computer Science, Ovidius University, Blvd. Mamaia 124, Constanta 900527, Romania\\
              ``Gheorghe Mihoc - Caius Iacob'' Institute of Statistical Mathematics and Applied Mathematics, Calea 13 Septembrie, Nr. 13, Bucharest 050711, Romania\\
              \email{cpopa@univ-ovidius.ro} 
}

\date{Received: date / Accepted: date\\Final revision: July 5, 2013.}

\maketitle

\begin{abstract}
In constraining iterative processes, the algorithmic operator of the iterative
process is pre-multiplied by a constraining operator at each iterative step.
This enables the constrained algorithm, besides solving the original problem,
also to find a solution that incorporates some prior knowledge about the
solution. This approach has been useful in image restoration and other image
processing situations when a single constraining operator was used. In the
field of image reconstruction from projections a priori information about the
original image, such as smoothness or that it belongs to a certain closed
convex set, may be used to improve the reconstruction quality. We study here
constraining of iterative processes by a family of operators rather than by a
single operator.

\keywords{constraining strategy \and strictly nonexpansive operators \and fixed points set \and least squares problems \and image reconstruction from projections}

\subclass{65F10 \and 65F20}

\end{abstract}

\section{Introduction\label{sec0}}

This paper is about \textit{constraining of iterative processes} which has the
following meaning. When dealing with a real-world problem it is sometimes the
case that we have some prior knowledge about features of the solution that is
being sought after. If possible, such prior knowledge may be formulated as an
additional constraint and added to the original problem formulation. But
sometimes, when we have already at our disposal a \textquotedblleft
good\textquotedblright\ algorithm for solving the original problem without
such an additional constraint, it is beneficial to modify the algorithm,
rather than the problem, so that it will, in some way, \textquotedblleft take
care\textquotedblright\ of the additional constraint (or constraints) without
loosing its ability to generate (finitely or asymptotically) a solution to the
original problem. This is called constraining of the original iterative
algorithm. Given an (algorithmic) operator $Q:R^{n}\rightarrow{R}^{n}$ between
Euclidean spaces, the original iterative process may have the form%
\begin{equation}
x^{k+1}=Q(x^{k}),\text{ for all }k\geq0,\label{eq:stat}%
\end{equation}
under various assumptions on $Q.$ Constraining such an algorithm with a family
of operators means that we desire to use instead of (\ref{eq:stat}) the
iterative process%
\begin{equation}
x^{k+1}=S_{k}Q(x^{k}),\text{ for all }k\geq0,
\end{equation}
where $\{S_{k}\}_{k=0}^{\infty}$ is a family of operators $S_{k}%
:R^{n}\rightarrow{R}^{n},$ henceforth called the \textit{constraining
operators}.

Our purpose is to study the possibility to constrain an algorithm with a
family of operators and to analyze the asymptotic behavior of such
family-constrained algorithms. We extend earlier results on this topic that
were limited to a single constraining operator, i.e., $S_{k}=S$ for all
$k\geq0,$ see, e.g., \cite{b04,c09,e07,kbms85,kbsm91,kn85,kl90,smr81,yw82},
introducing a family of strictly nonexpansive operators $\{S_{k}%
\}_{k=0}^{\infty}$ and proving the convergence of the family-constrained
algorithms in a more general setting.

The paper is organized as follows. In Section \ref{sec0bis}, for a family of
strictly nonexpansive operators with nonempty common fixed points set and a
supplementary image reconstruction condition we adapt some results from
\cite{DPI90} for our purpose and prove our main convergence result. We present
the \textit{family-constrained algorithm} (FCA) in Section \ref{sec1} and we
prove that the series expansion methods and the smoothing matrices used in
\cite{DPI90} obey all our hypotheses. In Section \ref{sec2} we show that the
general iterative method introduced in \cite{npps}, which includes the
algorithms of Kaczmarz, Cimmino and Diagonal Weighting (see, e.g.,
\cite{tanabe}, \cite{npps} and \cite{cpopaDW}, respectively) as special cases,
is itself an algorithmic operator of the form required here and we give an
example of a family of nonlinear constraining operators which satisfy our assumptions.

\subsection{Relation with previous work}\label{sec:prev}

Some earlier works on this topic were limited to a single constraining
operator, i.e., $S_{k}=S$ for all $k\geq0,$ see, e.g.,
\cite{b04,c09,e07,kbms85,kbsm91,kn85,kl90,smr81,yw82}. As seen in these works,
the algorithm constraining approach is successfully applied to problems of
image restoration, to smoothing in image reconstruction from projections (see
also \cite[Subsection 12.3]{h80}), and to constraining of linear iterative
processes in general. 
In \cite[Section 11.4]{h80} there is a discussion of all
kinds of, so called, \textquotedblleft tricks\textquotedblright\ that give
rise to what we call here \textquotedblleft constraining
operators.\textquotedblright\ This includes the trick of selective smoothing,
that is illustrated in detail in \cite[Section 5.3]{h80}. Historical
references can be found in \cite[p. 216]{h80}. The paper \cite{hl76} is the
original source of tricks in the field of image reconstruction from
projections.
A constraining strategy which applies a single strictly
nonexpansive idempotent operator at every iteration of the classical Kaczmarz
algorithm has been presented in \cite{kl90}. Recently \cite{npps}, {the third author} proposed
a generalization of this result by replacing Kaczmarz's algorithm by a more
general iterative process. Under the assumption that a family of strictly
nonexpansive operators $\{T_{k}\}_{k=0}^{\infty}$ has a nonempty common fixed
points set and an additional condition, reasonable in image reconstruction
problems, we proved that the sequence generated by the iterative scheme%
\begin{equation}
x^{0}\in R^{n}\text{ and }x^{k+1}=T_{k+1}(x^{k}),\text{ for all }k\geq0\text{,}%
\end{equation}
converges to a common fixed point of the operators $\{T_{k}\}_{k=0}^{\infty}$.

The particular problem of finding a common fixed point of nonlinear mappings
is an important topic in fixed point theory, see, e.g., the excellent recent
monograph \cite{cegielski-book}. We will denote by $F$ the set of common fixed
points. For finitely many paracontracting operators $T_{1},T_{2},\dots,T_{p}$
the following algorithm, proposed in \cite{ekn92},%
\begin{equation}
x^{0}\in R^{n}\text{ and }x^{k+1}=T_{j_{k+1}}(x^{k}),\text{ for all }k\geq0,\text{
with }\{j_{k}\}_{k=0}^{\infty}\text{ admissible},
\end{equation}
{converges if and only if $F$ is nonempty. Moreover, in this case the limit point of the sequence is an element of $F$}. The authors
introduced also a generalization of their result for a family $\{T_{k}%
\}_{k=0}^{\infty}$ consisting of finitely many subsequences convergent to
paracontracting operators. Our result in Theorem \ref{convergenta} is similar
in spirit to, but not identical with, \cite[Theorem 3]{ekn92}.

Under suitable assumptions, the convergence of the following algorithm,
proposed in \cite{b94},%
\begin{equation}
a,x^{0}\in R^{n}\text{ and }x^{k+1}=\alpha_{k+1}a+(1-\alpha_{k+1}%
)T_{k+1}(x^{k}),\text{ for all }k\geq0,\label{alg:b}%
\end{equation}
was investigated for a finite number of nonexpansive operators $T_{1}%
,T_{2},\dots,T_{p}$ activated cyclically, when $F\neq\emptyset$ and the
sequence $\{\alpha_{k}\}_{k=0}^{\infty}$ satisfies $\alpha_{k}\rightarrow0$,
$\sum_{k}{\left| \alpha_{k}-\alpha_{k+n}\right| }<+\infty$ and
$\sum_{k}{\alpha_{k}}=+\infty$. Bauschke \cite{b94} showed that the limit
point of any orbit generated by this algorithm is the projection $P_{F}(a)$ of
$a$ onto $F$. The question of finding $P_{F}(a)$ for a given $a$ is known as
the best approximation problem with respect to $F$.

Another approach in determining a common fixed point for a finite pool of
nonexpansive mappings was studied in \cite{kl97}. The authors examined the
convergence of an acceleration technique under various hypotheses. Their
method employs the construction of halfspaces at every iteration. The next
approximation is calculated by projecting the current one on a {surrogate halfspace}.

The following general algorithm was analyzed in \cite{c01} for a family
$\{T_{k}\}_{k=0}^{\infty}$ of firmly nonexpansive operators%
\begin{equation}
x^{0}\in R^{n}\text{ and }x^{k+1}=x^{k}+\alpha_{k+1}(T_{k+1}(x^{k}%
)-x^{k}),\text{ for all }k\geq0.
\end{equation}
Weak and corresponding strong convergence was established under various
assumptions on the sequence $\{\alpha_{k}\}_{k=0}^{\infty}$.

Hirstoaga \cite{h06} extended the results of \cite{b94} and showed under
suitable hypotheses the convergence of the algorithm%
\begin{equation}
x^{k+1}=\alpha_{k}Q(S_{k}x^{k})+(1-\alpha_{k})T_{k}x^{k},\text{ for all }k\geq0,
\end{equation}
where $\{T_{k}\}_{k=0}^{\infty}$ and $\{S_{k}\}_{k=0}^{\infty}$ are
quasi-nonexpansive, $Q$ is a strict contraction and $\{\alpha_{k}%
\}_{k=0}^{\infty}$ satisfies $\alpha_{k}\rightarrow0$ and $\sum_{k}{\alpha_k
}=+\infty$.

When solving the best approximation problem with respect to $F$ for a
uniformly asymptotically regular semigroup of nonexpansive operators,
\cite{ac05} introduced an algorithm similar to (\ref{alg:b}). Assuming that
$C$ is a convex subset of a real Hilbert space $H,$ that $G$ is an unbounded subset
of $R_{+}$, that $\{T_{t}\mid\text{ }t\in G\text{ and }T_{t}:H\rightarrow H\}$
is a uniformly asymptotically regular semigroup of nonexpansive operators with
$F\neq\emptyset$, that $\{\alpha_{k}\}_{k=0}^{\infty}$ is a steering sequence,
i.e., $\alpha_{k}\rightarrow0$, $\sum_{k}{\left| \alpha_{k}-\alpha
_{k+1}\right| }<+\infty$ and $\sum_{k}{\alpha_{k}}=+\infty$, and that
$\{r_{k}\}_{k=0}^{\infty}$ is an increasing unbounded sequence such that%
\begin{equation}
\sum_{k}{\operatorname{sup}_{x\in C}{\left\Vert T_{s}T_{r_{k+1}}%
(x)-T_{r_{k+1}}(x)\right\Vert }}<+\infty
\end{equation}
holds for all $s\in G$, it is proved in \cite{ac05} that the algorithm%
\begin{equation}
x^{0}\in H\text{ and }x^{k+1}=\alpha_{k+1}a+(1-\alpha_{k+1})T_{r_{k+1}}%
(x^{k}),\text{ for all }k\geq0,
\end{equation}
yields, for a given $a\in C,$ an approximation of $P_{F}(a)$, where $\parallel
\cdot\parallel$ is the induced norm.

\section{Convergence for a Family of Strictly Nonexpansive
Operators\label{sec0bis}}

We will prove in this section that, under two special hypotheses, an iterative
scheme which employs a family of {\textit{strictly nonexpansive (SNE)
operators}}$,$ i.e., operators that obey the next definition, converges to a
common fixed point.

{In the rest of the paper $\langle\cdot,\cdot\rangle$ and $\parallel
\cdot\parallel$ denote the Euclidean scalar product and norm, respectively, in
the $n$-dimensional Euclidean space ${R}^{n}$}.

\begin{definition}
\label{def:SNE}We say that an operator $T:R^{n}\rightarrow{R}^{n}$ is strictly
nonexpansive if, for all $x,y\in{R}^{n}$,%
\begin{equation}
\left\Vert T(x)-T(y)\right\Vert \leq\left\Vert x-y\right\Vert
,\label{def:SNE1}%
\end{equation}
and%
\begin{equation}
\text{if~}\left\Vert T(x)-T(y)\right\Vert =\left\Vert x-y\right\Vert
,~\text{then}~T(x)-T(y)=x-y.\label{def:SNE2}%
\end{equation}

\end{definition}

For a family $\{T_{k}\}_{k=0}^{\infty}$ of strictly nonexpansive operators we
define the fixed points sets and their intersection by%
\begin{equation}
\operatorname{Fix}(T_{k})=F_{k}:=\{x\in{R}^{n}\mid T_{k}(x)=x\}\text{ and
}F:=\cap_{k=0}^{\infty}F_{k},\label{Fk}%
\end{equation}
respectively, and assume that%
\begin{equation}
F\neq\emptyset.\label{F}%
\end{equation}
Consider the following algorithm.

\begin{algorithm}
\label{alg:gen}{\ }

\textbf{Initialization}\textit{:} $x^{0}\in{R}^{n}$ is arbitrary.

\textbf{Iterative step}\textit{: For every }$k\geq0,$ g\textit{iven the
current iterate }$x^{k}$ calculate the next iterate $x^{k+1}$ by%
\begin{equation}
x^{k+1}=T_{k+1}(x^{k}). \label{T_algoritm}%
\end{equation}

\end{algorithm}

\begin{remark}
Any sequence $\{x^{k}\}_{k=0}^{\infty},$ generated by Algorithm \ref{alg:gen},
is Fej\'{e}r monotone with respect to $F$ (see, e.g., \cite{c01}).
\end{remark}

The following two well-known results (see, e.g., \cite{bc11,c01}) will lead us
to the proof of convergence {of} Algorithm \ref{alg:gen}.

\begin{proposition}
\label{p6}Let $\{T_{k}\}_{k=0}^{\infty}$ be a family of strictly nonexpansive
operators for which (\ref{F}) holds and $z\in F$, then for any sequence
$\{x^{k}\}_{k=0}^{\infty},$ generated by Algorithm \ref{alg:gen}, the sequence
$\{\left\Vert x^{k}-z\right\Vert \}_{k=0}^{\infty}$ is decreasing.
\end{proposition}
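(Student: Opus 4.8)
The plan is to fix $z \in F$ and show that the real sequence $a_k := \left\Vert x^{k}-z\right\Vert$ is monotonically decreasing by comparing consecutive terms. The key observation is that since $z \in F$, we have $T_{k+1}(z) = z$ for every $k \geq 0$, so the iterative step $x^{k+1} = T_{k+1}(x^{k})$ can be rewritten as $x^{k+1} - z = T_{k+1}(x^{k}) - T_{k+1}(z)$. This is exactly the kind of difference that the defining inequality of a strictly nonexpansive operator controls.

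First I would apply part (\ref{def:SNE1}) of Definition \ref{def:SNE} with the operator $T_{k+1}$ and the two points $x^{k}$ and $z$. This gives
\begin{equation}
\left\Vert x^{k+1}-z\right\Vert = \left\Vert T_{k+1}(x^{k})-T_{k+1}(z)\right\Vert \leq \left\Vert x^{k}-z\right\Vert ,
\end{equation}
that is, $a_{k+1} \leq a_{k}$ for every $k \geq 0$. Since $k$ was arbitrary, the sequence $\{a_k\}_{k=0}^{\infty}$ is decreasing, which is precisely the assertion. Note that only the nonexpansivity inequality (\ref{def:SNE1}) is needed here; the strictness condition (\ref{def:SNE2}) plays no role at this stage, although it will presumably be essential later when one upgrades this monotonicity to actual convergence of $\{x^{k}\}$.

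Honestly, there is no serious obstacle in this proof; the only thing to be careful about is the bookkeeping with the operator indices, namely that the operator applied in passing from $x^{k}$ to $x^{k+1}$ is $T_{k+1}$ (as in (\ref{T_algoritm})), and that $z$ being a common fixed point means $z \in F_{k+1}$ in particular, so $T_{k+1}(z)=z$. One could also remark that this proposition is the quantitative content of the Fej\'er monotonicity noted in the preceding remark: the distance to any point of $F$ never increases along the orbit. Since $\{a_k\}$ is decreasing and bounded below by $0$, it converges, a fact that is the natural springboard for the subsequent analysis.
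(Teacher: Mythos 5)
Your proof is correct: the paper states Proposition \ref{p6} without proof, citing it as well known, and your argument --- using $T_{k+1}(z)=z$ for $z\in F$ together with the nonexpansivity inequality (\ref{def:SNE1}) to get $\left\Vert x^{k+1}-z\right\Vert \leq\left\Vert x^{k}-z\right\Vert$ --- is exactly the standard one-line justification behind that citation. Nothing is missing, and your observation that only (\ref{def:SNE1}) is needed here is accurate.
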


\begin{corollary}
\label{cor2}Under the assumptions of Proposition \ref{p6}, any sequence
$\{x^{k}\}_{k=0}^{\infty},$ generated by Algorithm \ref{alg:gen}, is bounded.
\end{corollary}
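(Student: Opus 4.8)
The plan is to obtain boundedness as an immediate consequence of the monotonicity already established in Proposition \ref{p6}. First I would fix an arbitrary point $z\in F$; such a point exists precisely because of hypothesis (\ref{F}), and it will serve as an anchor for the whole argument. Applying Proposition \ref{p6} to the given sequence $\{x^{k}\}_{k=0}^{\infty}$ and this $z$, I get that the scalar sequence $\{\|x^{k}-z\|\}_{k=0}^{\infty}$ is decreasing, hence bounded above by its initial value $\|x^{0}-z\|$.

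The second step is to transfer this estimate from $\|x^{k}-z\|$ to $\|x^{k}\|$ by a single application of the triangle inequality: for every $k\geq0$ one has $\|x^{k}\|\leq\|x^{k}-z\|+\|z\|\leq\|x^{0}-z\|+\|z\|$. The right-hand side is a finite constant that does not depend on $k$, so the whole orbit lies inside the closed Euclidean ball of that radius centered at the origin, which is exactly the assertion that $\{x^{k}\}_{k=0}^{\infty}$ is bounded.

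I do not expect any genuine obstacle here; the statement is a routine corollary. The only points that deserve a moment's care are that hypothesis (\ref{F}) is what makes a valid choice of $z$ possible in the first place, and that Proposition \ref{p6} applies verbatim to \emph{any} sequence produced by Algorithm \ref{alg:gen}, independently of the (arbitrary) initialization $x^{0}$, so the bound obtained is uniform over the sequence even though the constant itself depends on $x^{0}$ and on the chosen $z$.
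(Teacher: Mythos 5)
Your proof is correct and follows exactly the route the paper intends: the corollary is stated as an immediate consequence of Proposition \ref{p6}, namely that $\|x^{k}-z\|\leq\|x^{0}-z\|$ for a fixed $z\in F$ (which exists by (\ref{F})), and the triangle inequality then bounds $\|x^{k}\|$ uniformly. No issues.
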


We will make use of the following additional condition.

\begin{condition}
\label{ipoteza_suplimentara}Under the assumptions of Proposition \ref{p6}, if
$\{x^{k}\}_{k=0}^{\infty}$ is any sequence, generated by Algorithm
\ref{alg:gen}, then for every $\ell\geq0$, there exists an index $k(\ell
)\geq0$ such that%
\begin{equation}
\left\Vert T_{k+1}(x^{k})-z\right\Vert \leq\left\Vert T_{\ell}(x^{k}%
)-z\right\Vert ,
\end{equation}
for all $z\in F$ and all $k\geq k(\ell)$.
\end{condition}

\begin{remark}
\label{CP} Condition \ref{ipoteza_suplimentara} induces a kind of
\textquotedblleft monotonicity\textquotedblright\ concerning the sequence
$\{x^{k}\}_{k=0}^{\infty}$ generated by Algorithm \ref{alg:gen}. This becomes
clearer in Lemma \ref{vstarinf_nevida} in Section \ref{sec2} below, where the
assumption (\ref{conditie}) is invoked. It differs from the \textit{strong
attractivity with respect to }$F$ of a nonexpansive operator $T$ which is
defined in \cite[page 372]{bb96} by%
\begin{equation}
k\left\Vert Tx-x\right\Vert ^{2}\leq\left\Vert x-f\right\Vert ^{2}-\left\Vert
Tx-f\right\Vert ^{2},\label{CPE}%
\end{equation}
where $k$ is a positive constant.
\end{remark}

\begin{proposition}
\label{prop7}Under the assumptions of Proposition \ref{p6} and the assumption
that Condition \ref{ipoteza_suplimentara} holds, if $\overline{x}$ is an
accumulation point of a sequence $\{x^{k}\}_{k=0}^{\infty},$ generated by
Algorithm \ref{alg:gen}, then $\overline{x}\in F$.
\end{proposition}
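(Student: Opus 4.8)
The plan is to exploit the Fej\'er monotonicity guaranteed by Proposition~\ref{p6} together with Condition~\ref{ipoteza_suplimentara} to show that, along the subsequence converging to $\overline{x}$, each operator $T_\ell$ moves the iterates less and less, so that in the limit $T_\ell(\overline{x})=\overline{x}$. Fix an arbitrary index $\ell\geq 0$; since $\overline{x}\in F_\ell$ is what we want, it suffices to prove $T_\ell(\overline{x})=\overline{x}$ for this $\ell$. Let $\{x^{k_j}\}_{j=0}^{\infty}$ be a subsequence with $x^{k_j}\to\overline{x}$, and pick any $z\in F$ (nonempty by~(\ref{F})). By Proposition~\ref{p6} the full sequence $\{\|x^{k}-z\|\}_{k=0}^{\infty}$ is decreasing and bounded below, hence convergent; call its limit $d\geq 0$. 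Passing to the limit along the subsequence gives $d=\|\overline{x}-z\|$.

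Next I would bring in Condition~\ref{ipoteza_suplimentara}: there is an index $k(\ell)$ such that for all $k\geq k(\ell)$ and all $z\in F$,
\[
\|x^{k+1}-z\| = \|T_{k+1}(x^{k})-z\| \leq \|T_{\ell}(x^{k})-z\|.
\]
On the other hand, since $z\in F_\ell$ and $T_\ell$ is strictly nonexpansive (in particular nonexpansive), $\|T_\ell(x^{k})-z\| = \|T_\ell(x^{k})-T_\ell(z)\| \leq \|x^{k}-z\|$. Chaining these for $k=k_j$ with $k_j\geq k(\ell)$ yields
\[
\|x^{k_j+1}-z\| \leq \|T_\ell(x^{k_j})-z\| \leq \|x^{k_j}-z\|,
\]
and both outer terms converge to $d$ as $j\to\infty$, so $\|T_\ell(x^{k_j})-z\|\to d = \|\overline{x}-z\|$. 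Because $T_\ell$ is continuous (it is nonexpansive), $T_\ell(x^{k_j})\to T_\ell(\overline{x})$, and therefore $\|T_\ell(\overline{x})-z\| = \|\overline{x}-z\|$.

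It remains to upgrade this equality of distances to $z$ into the identity $T_\ell(\overline{x})=\overline{x}$, and this is where the strict (as opposed to merely plain) nonexpansiveness is essential. Apply~(\ref{def:SNE1}) with $x=x^{k_j}$ and $y=\overline{x}$: since $\|T_\ell(x^{k_j})-T_\ell(\overline{x})\|\leq\|x^{k_j}-\overline{x}\|\to 0$, we get $T_\ell(x^{k_j})\to T_\ell(\overline{x})$ again, consistent with the above. To force equality, I would instead argue as follows: we have shown $\|T_\ell(\overline{x})-T_\ell(z)\| = \|T_\ell(\overline{x})-z\| = \|\overline{x}-z\| = \|\overline{x}-T_\ell(z)\|$, so the pair $x=\overline{x}$, $y=z$ realizes equality in~(\ref{def:SNE1}) for the operator $T_\ell$; by~(\ref{def:SNE2}) this gives $T_\ell(\overline{x})-T_\ell(z) = \overline{x}-z$, i.e.\ $T_\ell(\overline{x}) - z = \overline{x}-z$, hence $T_\ell(\overline{x})=\overline{x}$. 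Thus $\overline{x}\in F_\ell$, and since $\ell$ was arbitrary, $\overline{x}\in\bigcap_{\ell\geq 0}F_\ell = F$.

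The main obstacle is the last step: one must be careful that the equality $\|T_\ell(\overline{x})-z\|=\|\overline{x}-z\|$ is exactly an instance of the equality case in the strict-nonexpansiveness inequality for the single operator $T_\ell$ applied to the argument pair $(\overline{x},z)$, using $T_\ell(z)=z$. A secondary subtlety is making sure Condition~\ref{ipoteza_suplimentara} can be applied along the chosen subsequence, i.e.\ discarding finitely many initial indices so that $k_j\geq k(\ell)$; this is harmless since convergence of $x^{k_j}$ is unaffected by dropping finitely many terms.
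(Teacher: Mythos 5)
Your proof is correct and follows essentially the same route as the paper's: sandwich $\left\Vert T_{\ell}(x^{k_j})-z\right\Vert$ between two quantities tending to $\left\Vert \overline{x}-z\right\Vert$ via Condition \ref{ipoteza_suplimentara} and nonexpansiveness, then invoke the equality case (\ref{def:SNE2}) for the pair $(\overline{x},z)$ with $T_{\ell}(z)=z$. The only cosmetic difference is that you use convergence of the full monotone distance sequence where the paper compares $\left\Vert x^{k_{s+1}}-z\right\Vert$ with $\left\Vert x^{k_s+1}-z\right\Vert$ directly; both are valid and equivalent in substance.
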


\begin{proof}
The boundedness of $\{x^{k}\}_{k=0}^{\infty}$ that follows from Corollary
\ref{cor2}, guarantees the existence of $\overline{x}$. Let $\{x^{k_{s}%
}\}_{s=0}^{\infty}\subseteq\{x^{k}\}_{k=0}^{\infty}$ such that $\lim
_{s\rightarrow\infty}x^{k_{s}}=\overline{x}.$ Take any $\ell\geq0$ and let
$k(\ell)$ be as in Condition \ref{ipoteza_suplimentara}. There exists an
$s(\ell)\geq0$ such that $k_{s}\geq k(\ell)$, for all $s\geq s(\ell)$. 
{As $\{x^{k_{s}}\}_{s=0}^{\infty}$ is a subsequence of $\{x^{k}\}_{k=0}^{\infty}$, we have for 
every $s\geq0$}
\begin{equation}
k_{s+1}\geq k_{s}+1\geq k_{s}. \label{inegalitate_indici}%
\end{equation}
From (\ref{F}) there exists $z\in F$. Then, for $s\geq s(\ell)$, we have from
(\ref{inegalitate_indici}), Proposition \ref{p6}, (\ref{T_algoritm}),
Condition \ref{ipoteza_suplimentara} and (\ref{def:SNE1}),%
\begin{align}
\left\Vert x^{k_{s+1}}-z\right\Vert  &  \leq\left\Vert x^{k_{s}+1}%
-z\right\Vert =\left\Vert T_{k_{s}+1}(x^{k_{s}})-z\right\Vert \leq\left\Vert
T_{\ell}(x^{k_{s}})-z\right\Vert \nonumber\\
&  =\left\Vert T_{\ell}(x^{k_{s}})-T_{\ell}(z)\right\Vert \leq\left\Vert
x^{k_{s}}-z\right\Vert .
\end{align}
By taking limits in the last inequality, as $s\rightarrow\infty,$ we get%
\begin{equation}
\left\Vert \overline{x}-z\right\Vert \leq\left\Vert T_{\ell}(\overline
{x})-z\right\Vert \leq\left\Vert \overline{x}-z\right\Vert ,
\end{equation}
therefore $\left\Vert T_{\ell}(\overline{x})-z\right\Vert =\left\Vert
\overline{x}-z\right\Vert $ and, using (\ref{def:SNE2}), it follows that
$T_{\ell}(\overline{x})=\overline{x}$ implying $\overline{x}\in F_{l}$. Since
$\ell$ was arbitrarily chosen we obtain%
\begin{equation}
\overline{x}\in F, \label{limita_apartine}%
\end{equation}
which completes the proof.

\smartqed \end{proof}

We can now state our main convergence result, which follows directly from
Proposition \ref{prop7}.

\begin{theorem}
\label{convergenta}Under the assumptions of Proposition \ref{p6} and the
assumption that Condition \ref{ipoteza_suplimentara} holds, any sequence
$\{x^{k}\}_{k=0}^{\infty},$ generated by Algorithm \ref{alg:gen}, converges to
an element of $F$.
\end{theorem}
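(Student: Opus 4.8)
The plan is to combine Fej\'er monotonicity with Proposition \ref{prop7}. Fix some $z\in F$, which exists by \eqref{F}. By Proposition \ref{p6}, the sequence $\{\left\Vert x^{k}-z\right\Vert\}_{k=0}^{\infty}$ is decreasing and bounded below by $0$, hence it converges; call its limit $d\geq 0$. By Corollary \ref{cor2} the sequence $\{x^{k}\}_{k=0}^{\infty}$ is bounded, so it has at least one accumulation point $\overline{x}$. Proposition \ref{prop7} then tells us $\overline{x}\in F$, so in particular $z$ may be replaced by $\overline{x}$ in the preceding remarks: $\{\left\Vert x^{k}-\overline{x}\right\Vert\}_{k=0}^{\infty}$ is also a decreasing convergent sequence.

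The key step is to identify its limit as $0$. Choose a subsequence $\{x^{k_{s}}\}_{s=0}^{\infty}$ with $x^{k_{s}}\to\overline{x}$. Then $\left\Vert x^{k_{s}}-\overline{x}\right\Vert\to 0$, so the limit of the (monotone, hence fully convergent) sequence $\{\left\Vert x^{k}-\overline{x}\right\Vert\}_{k=0}^{\infty}$ must be $0$ as well, since a convergent sequence and any of its subsequences share the same limit. Therefore $\left\Vert x^{k}-\overline{x}\right\Vert\to 0$, i.e., $x^{k}\to\overline{x}$, and we already know $\overline{x}\in F$. This proves convergence of $\{x^{k}\}_{k=0}^{\infty}$ to an element of $F$.

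I do not anticipate a serious obstacle here, since all the heavy lifting is done by Proposition \ref{prop7} (which in turn rests on Condition \ref{ipoteza_suplimentara} and strict nonexpansiveness) and by the Fej\'er-type monotonicity in Proposition \ref{p6}. The only point requiring minor care is the legitimacy of applying Proposition \ref{p6} with the accumulation point $\overline{x}$ in place of a prescribed $z\in F$; this is justified precisely because Proposition \ref{prop7} guarantees $\overline{x}\in F$, so $\overline{x}$ is an admissible choice. The argument also implicitly shows the accumulation point is unique, which is consistent with (but not needed beyond) the stated conclusion.

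\begin{proof}
By \eqref{F} there exists $z\in F$. Corollary \ref{cor2} ensures that $\{x^{k}\}_{k=0}^{\infty}$ is bounded, hence it possesses an accumulation point $\overline{x}$, and Proposition \ref{prop7} yields $\overline{x}\in F$. Applying Proposition \ref{p6} with $z$ replaced by $\overline{x}\in F$, the sequence $\{\left\Vert x^{k}-\overline{x}\right\Vert\}_{k=0}^{\infty}$ is decreasing and bounded below by $0$, and therefore convergent. Let $\{x^{k_{s}}\}_{s=0}^{\infty}$ be a subsequence with $\lim_{s\rightarrow\infty}x^{k_{s}}=\overline{x}$; then $\lim_{s\rightarrow\infty}\left\Vert x^{k_{s}}-\overline{x}\right\Vert =0$. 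Since $\{\left\Vert x^{k}-\overline{x}\right\Vert\}_{k=0}^{\infty}$ converges and has a subsequence tending to $0$, its limit is $0$, i.e.,
\begin{equation}
\lim_{k\rightarrow\infty}\left\Vert x^{k}-\overline{x}\right\Vert =0.
\end{equation}
Thus $\{x^{k}\}_{k=0}^{\infty}$ converges to $\overline{x}\in F$, which completes the proof.
\smartqed
\end{proof}
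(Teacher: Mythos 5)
Your proof is correct and follows exactly the route the paper intends: the paper states only that the theorem ``follows directly from Proposition \ref{prop7},'' and your argument supplies precisely the standard Fej\'er-monotonicity details that are being omitted (apply Proposition \ref{p6} with $z=\overline{x}\in F$, then use the convergent subsequence to force the monotone limit to be $0$). No gaps.
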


\begin{remark}\label{remark:paracontracting} 
{Replacing the strict nonexpansivity of the operators $\{T_{k}\}_{k=0}^{\infty}$ with the assumption that they belong to the wider class of paracontracting operators (see \cite[Definition 1]{ekn92}), the results stated in Proposition \ref{p6}, Proposition \ref{prop7} and, consequently, Theorem \ref{convergenta} still hold.
}
\end{remark}

We present in the next section the case when every $T_{k}$, with $k\geq0$, is
the composition of a constraining operator $S_{k}$ with an algorithmic
operator $Q$.

\section{The Family-Constrained Algorithm (FCA) \label{sec1}}

Many iterative algorithms are of, or can be cast into, the form of
\textit{one-step stationary iterations} (see, e.g., \cite[Chapter 10]{OR70}).
Given an algorithmic operator $Q:R^{n}\rightarrow{R}^{n}$, {the original iterative process may have the form}%
\begin{equation}
x^{k+1}=Q(x^{k}),\text{ for all }k\geq0,\label{eq:stati}%
\end{equation}
under various assumptions on $Q.$ Constraining such an algorithm with a family
of operators means that we desire to use instead of (\ref{eq:stati}) the
iterative process
\begin{equation}
x^{k+1}=S_{k}Q(x^{k}),\text{ for all }k\geq0,
\end{equation}
where $\{S_{k}\}_{k=0}^{\infty}$ is a family of operators $S_{k}%
:R^{n}\rightarrow{R}^{n},$ henceforth called \textit{constraining operators}.

If $Q:R^{n}\rightarrow{R}^{n}$ and $S_{k}:R^{n}\rightarrow{R}^{n}$, with
$k\geq0$, are strictly nonexpansive, we define the operators $T_{k}%
:R^{n}\rightarrow{R}^{n}$ by
\begin{equation}
T_{k}(x):=S_{k}Q(x),\text{ for all }k\geq0,\label{eq:Tk}%
\end{equation}
and prove that they are also strictly nonexpansive. The following result
extends \cite[Proposition 4]{DPI90}.

\begin{proposition}
\label{p4}For any $k\geq0,$ an operator $T_{k}$ as in (\ref{eq:Tk}), in which
$Q$ and $S_{k}$ {are strictly
nonexpansive}, has the following properties:%
\begin{equation}
\left\Vert T_{k}(x)-T_{k}(y)\right\Vert \leq\left\Vert x-y\right\Vert ,\text{
{for all }}x,y\in{R}^{n}, \label{prop:p4-1}%
\end{equation}
and%
\begin{equation}
\text{if }\left\Vert T_{k}(x)-T_{k}(y)\right\Vert =\left\Vert x-y\right\Vert
,\text{ then }T_{k}(x)-T_{k}(y)=Q(x)-Q(y)=x-y. \label{prop:p4-2}%
\end{equation}

\end{proposition}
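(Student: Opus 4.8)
The plan is to verify the two properties in order, leaning on the strict nonexpansivity of $Q$ first and then of $S_k$.

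\textbf{Step 1 (inequality \eqref{prop:p4-1}).} This is immediate by composition: for any $x,y\in R^n$, applying \eqref{def:SNE1} to $S_k$ and then to $Q$ gives
\begin{equation}
\left\Vert T_k(x)-T_k(y)\right\Vert=\left\Vert S_k(Q(x))-S_k(Q(y))\right\Vert\leq\left\Vert Q(x)-Q(y)\right\Vert\leq\left\Vert x-y\right\Vert.
\end{equation}

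\textbf{Step 2 (the equality case \eqref{prop:p4-2}).} Suppose $\left\Vert T_k(x)-T_k(y)\right\Vert=\left\Vert x-y\right\Vert$. The chain of inequalities in Step 1 then forces both intermediate quantities to coincide with the extremes, i.e.
\begin{equation}
\left\Vert S_k(Q(x))-S_k(Q(y))\right\Vert=\left\Vert Q(x)-Q(y)\right\Vert=\left\Vert x-y\right\Vert.
\end{equation}
From $\left\Vert Q(x)-Q(y)\right\Vert=\left\Vert x-y\right\Vert$ and the strict nonexpansivity of $Q$ (property \eqref{def:SNE2}), we get $Q(x)-Q(y)=x-y$. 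Next, from $\left\Vert S_k(Q(x))-S_k(Q(y))\right\Vert=\left\Vert Q(x)-Q(y)\right\Vert$ and the strict nonexpansivity of $S_k$ applied to the pair $Q(x),Q(y)$, we obtain $S_k(Q(x))-S_k(Q(y))=Q(x)-Q(y)$, that is, $T_k(x)-T_k(y)=Q(x)-Q(y)$. Combining the two identities yields $T_k(x)-T_k(y)=Q(x)-Q(y)=x-y$, which is exactly \eqref{prop:p4-2}.

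There is no real obstacle here; the only thing to be careful about is the \emph{order} in which the two strict-nonexpansivity hypotheses are invoked — one must first extract the norm equality at the inner stage $\left\Vert Q(x)-Q(y)\right\Vert=\left\Vert x-y\right\Vert$ from the squeeze, then peel off $Q$, and only afterwards peel off $S_k$ using the pair $Q(x),Q(y)$ as its arguments. Writing the squeeze chain explicitly and reading off the forced equalities makes this transparent, and the whole argument is just two applications of Definition \ref{def:SNE}.
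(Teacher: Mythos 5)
Your proof is correct and follows essentially the same route as the paper: the inequality comes from chaining \eqref{def:SNE1} for $S_k$ and $Q$, and the equality case comes from observing that equality forces both links of the chain to be equalities, then applying \eqref{def:SNE2} to each operator in turn. The paper's version is terser (it simply says equalities in the chain plus \eqref{def:SNE2} yield the conclusion), while you make explicit the order in which the two applications of \eqref{def:SNE2} are peeled off, which is a harmless elaboration of the same argument.
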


\begin{proof}
To prove (\ref{prop:p4-1}) we use (\ref{def:SNE1}) to obtain%
\begin{equation}
\left\Vert T_{k}(x)-T_{k}(y)\right\Vert =\left\Vert S_{k}(Q(x))-S_{k}%
(Q(y))\right\Vert \leq\left\Vert Q(x)-Q(y)\right\Vert \leq\left\Vert
x-y\right\Vert . \label{prop:p4-3}%
\end{equation}
To prove (\ref{prop:p4-2}) {suppose that} we have equalities in (\ref{prop:p4-3}). Using
(\ref{def:SNE2}) we obtain%
\begin{equation}
T_{k}(x)-T_{k}(y)=Q(x)-Q(y)=x-y,
\end{equation}
which completes the proof. 
\smartqed \end{proof}

For $\{T_{k}\}_{k=0}^{\infty}$ defined according to (\ref{eq:Tk}), {with $\{S_{k}\}_{k=0}^{\infty}$ and $Q$ strictly nonexpansive}, Algorithm
\ref{alg:gen} may be written as a constrained algorithm.

\begin{algorithm}
\label{alg:conalg}\textbf{The Family-Constrained Algorithm (FCA)}

\textbf{Initialization}\textit{:} $x^{0}\in{R}^{n}$ is arbitrary.

\textbf{Iterative step}\textit{: For every }$k\geq0,$ g\textit{iven the
current iterate }$x^{k}$ calculate the next iterate $x^{k+1}$ by%
\begin{equation}
x^{k+1}=S_{k+1}Q(x^{k}). \label{algoritm}%
\end{equation}

\end{algorithm}

Proposition \ref{p4} and Theorem \ref{convergenta} yield that if assumptions
(\ref{F}) and Condition \ref{ipoteza_suplimentara} hold, then {any sequence generated by the Algorithm \ref{alg:conalg} converges to an element of $F$}.

We prove next that if $Q\in\mathcal{F}_{2}$ (see Definition \ref{F2} below)
and $S_{k}=S$, for all $k\geq0$, is a smoothing matrix, such as the one used
in \cite{DPI90}, then the family defined by (\ref{eq:Tk}) satisfies all our
hypotheses. We use the following definitions.

\begin{definition}
\label{F1:original}\cite[Definition 1]{DPI90} Let $\mathcal{F}_{1}$ be the set
of continuous operators $Q:R^{n}\rightarrow{R}^{n}$ that satisfy%
\begin{equation}
\left\Vert Q(x)-Q(y)\right\Vert \leq\left\Vert x-y\right\Vert ,{\ }\text{{for
all }}x,y\in{R}^{n}. \label{def:F1-1:original}%
\end{equation}
and%
\begin{align}
\text{If }\left\Vert Q(x)-Q(y)\right\Vert  &  =\left\Vert x-y\right\Vert
,\text{ then }\nonumber\\
Q(x)-Q(y)  &  =x-y\text{ and }\langle x-y,Q(y)-y\rangle=0.
\label{def:F1-2:original}%
\end{align}

\end{definition}

\begin{definition}
\label{F2}\cite[Definition 2]{DPI90} Let $\mathcal{F}_{2}$ be the set of
operators $Q\in\mathcal{F}_{1}$ with the property that for all $S\in
{R}^{n\times n}$ the function $g:{R}^{n}\rightarrow{R}$ defined by%
\begin{equation}
g(x):=\left\Vert x-SQ(x)\right\Vert ^{2} \label{eq:gi}%
\end{equation}
attains its unconstrained global minimum.
\end{definition}

It is clear that if $Q\in\mathcal{F}_{2}$, then $Q$ is strictly nonexpansive.
We show next that the family $\{S_{k}\}_{k=0}^{\infty}$ with $S_{k}=S$, for
all $k\geq0$, {where $S$ is a symmetric, stochastic, with positive diagonal matrix, is strictly nonexpansive. Such a matrix $S$} satisfies the two following properties%
\begin{equation}
\left\Vert Sx\right\Vert \leq\left\Vert x\right\Vert ,\text{ for all }x\in
{R}^{n},\label{cor1a}%
\end{equation}
and%
\begin{equation}
\left\Vert Sx\right\Vert =\left\Vert x\right\Vert \text{ implies that
}Sx=x,\label{cor1b}%
\end{equation}
(see \cite[Corollary 1]{DPI90}).
From (\ref{cor1a}) it follows that%
\begin{equation}
\left\Vert Sx-Sy\right\Vert =\left\Vert S(x-y)\right\Vert \leq\left\Vert
x-y\right\Vert ,\text{ for all }x,y\in{R}^{n}.
\end{equation}
Consequently, for $x,y\in{R}^{n}$, if%
\begin{equation}
\left\Vert Sx-Sy\right\Vert =\left\Vert x-y\right\Vert ,
\end{equation}
then, from (\ref{cor1b}),%
\begin{equation}
\left\Vert S(x-y)\right\Vert =\left\Vert x-y\right\Vert \text{ implies that
}S(x-y)=Sx-Sy=x-y.
\end{equation}

If $Q \in\mathcal{F}_{2}$ and $S$ is a symmetric, stochastic, with positive
diagonal matrix, then the set of fixed points of the operator $T:=SQ$ is not
empty (see \cite[Lemma 1]{DPI90}). {Finally, Condition
\ref{ipoteza_suplimentara} is trivial in the context of a single operator used
at every iteration of the Algorithm \ref{alg:conalg}}.

\section{Solving The Linear Least Squares Problem\label{sec2}}

We show in this section that a commonly used, in the field of image
reconstruction, algorithmic operator $Q$ obeys the conditions set forth in
Section \ref{sec1}. Consider the linear least squares (LLS) problem of seeking
a vector $x\in R^{n}$ such that%
\begin{equation}
\left\Vert Ax-b\right\Vert =\min\{\left\Vert Az-b\right\Vert \mid z\in
R^{n}\},\label{eq:LSS}%
\end{equation}
where the matrix $A$ is $m\times n$ and $b\in R^{m}$. We use the notations
$A^{T},$ $\mathcal{R}(A)$, $\mathcal{N}(A)$, for the transpose, range and null
space of $A$, respectively, and $LSS(A;b)$ and $x_{LS},$ for the set of all
least squares solutions and the minimal norm solution of (\ref{eq:LSS}), respectively.

We present in the sequel a general iterative method, introduced recently in
\cite{npps}, and prove that its algorithmic operator is strictly nonexpansive
and, moreover, belongs to $\mathcal{F}_{2}$. Let $T$ and $R$ be matrices of
dimensions $n\times n$ and $n\times m,$ respectively, having the following {three}
properties with respect to a given $m\times n$ matrix $A$:%
\begin{equation}
T+RA=I,\label{pr1}%
\end{equation}
where $I$ is the {identity} matrix;%
\begin{equation}
\text{for every }y\in R^{m}\text{ we have}~Ry\in{\mathcal{R}}(A^{T});\label{pr4}
\end{equation}%
\begin{equation}
\text{defining}~\tilde{T}:=TP_{{\mathcal{R}}(A^{T})}~\text{we have}%
~\left\Vert\tilde{T}\right\Vert<1,\label{pr5}%
\end{equation}
where {$P_V$ and $\Vert\tilde{T}\Vert$ denote the orthogonal projection onto a linear subspace $V$ and the induced norm of $\tilde{T}$, respectively}.

The following result is known, see, e.g., \cite{npps}.

\begin{proposition}
When $A$ and $b$ are as in (\ref{eq:LSS}) and the matrices $T$, $R$ and $A$
have the properties (\ref{pr1})--(\ref{pr5}), then the matrix $T$ has the
properties%
\begin{equation}
\text{if}~x\in{\mathcal{N}}(A)~\text{then}~Tx=x,\label{pr2}%
\end{equation}%
\begin{equation}
\text{if~}x\in{\mathcal{R}}(A^{T})~\text{then}~Tx\in{\mathcal{R}}%
(A^{T}),\label{pr3}%
\end{equation}%
\begin{equation}
\left\Vert Tx\right\Vert =\left\Vert x\right\Vert ~\text{if~~and~~only~~if}%
~x\in{\mathcal{N}}(A)\label{pr6}%
\end{equation}
and%
\begin{equation}
\left\Vert T\right\Vert ~\leq1.\label{pr7}%
\end{equation}

\end{proposition}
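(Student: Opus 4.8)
The plan is to establish each of the four conclusions (\ref{pr2})--(\ref{pr7}) in an order that lets the later ones reuse the earlier ones, exploiting the orthogonal decomposition $R^{n}=\mathcal{N}(A)\oplus\mathcal{R}(A^{T})$ that underlies everything here. First I would prove (\ref{pr2}): if $x\in\mathcal{N}(A)$ then $Ax=0$, so from (\ref{pr1}) we get $Tx=(I-RA)x=x-R(Ax)=x$. Next, (\ref{pr3}): take $x\in\mathcal{R}(A^{T})$ and write $Tx=(I-RA)x=x-RAx$; since $x\in\mathcal{R}(A^{T})$ by hypothesis and $RAx\in\mathcal{R}(A^{T})$ by (\ref{pr4}) (with $y=Ax$), the difference lies in $\mathcal{R}(A^{T})$, which is a linear subspace. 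With (\ref{pr2}) and (\ref{pr3}) in hand one sees that $T$ leaves both summands of the orthogonal decomposition invariant and acts as the identity on $\mathcal{N}(A)$.

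For the norm bound (\ref{pr7}), I would decompose an arbitrary $x\in R^{n}$ as $x=u+v$ with $u\in\mathcal{N}(A)$ and $v\in\mathcal{R}(A^{T})$. By (\ref{pr2}) $Tu=u$ and by (\ref{pr3}) $Tv\in\mathcal{R}(A^{T})$, so $Tx=u+Tv$ is again the orthogonal decomposition of $Tx$; hence by the Pythagorean theorem $\|Tx\|^{2}=\|u\|^{2}+\|Tv\|^{2}$. Now $Tv=TP_{\mathcal{R}(A^{T})}v=\tilde{T}v$, so $\|Tv\|\le\|\tilde{T}\|\,\|v\|<\|v\|$ whenever $v\neq0$, using (\ref{pr5}); combined with $\|x\|^{2}=\|u\|^{2}+\|v\|^{2}$ this yields $\|Tx\|^{2}\le\|u\|^{2}+\|v\|^{2}=\|x\|^{2}$, i.e.\ (\ref{pr7}). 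The same computation gives (\ref{pr6}) for free: equality $\|Tx\|=\|x\|$ forces $\|\tilde{T}v\|=\|v\|$, which by (\ref{pr5}) is impossible unless $v=0$, so $x=u\in\mathcal{N}(A)$; the converse direction of (\ref{pr6}) is immediate from (\ref{pr2}).

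The step I expect to require the most care is making the orthogonal decomposition of $Tx$ rigorous, i.e.\ arguing that $Tx=Tu+Tv$ really is \emph{the} orthogonal splitting of $Tx$ into its $\mathcal{N}(A)$- and $\mathcal{R}(A^{T})$-components. This hinges precisely on (\ref{pr2}) placing $Tu=u$ in $\mathcal{N}(A)$ and on (\ref{pr3}) placing $Tv$ in $\mathcal{R}(A^{T})$, so once those two facts are established the Pythagorean step is automatic; the only subtlety is that one must invoke both, not just one of them. Everything else is routine linear algebra: the identities follow directly from (\ref{pr1}) and (\ref{pr4}), and the strict contraction hypothesis (\ref{pr5}) does all the quantitative work for (\ref{pr6}) and (\ref{pr7}).
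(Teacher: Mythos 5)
Your proof is correct and complete: (\ref{pr2}) and (\ref{pr3}) follow exactly as you say from (\ref{pr1}) and (\ref{pr4}), and the Pythagorean argument via the orthogonal decomposition $R^{n}=\mathcal{N}(A)\oplus\mathcal{R}(A^{T})$, together with $Tv=\tilde{T}v$ for $v\in\mathcal{R}(A^{T})$ and the bound $\Vert\tilde{T}\Vert<1$ from (\ref{pr5}), cleanly delivers both (\ref{pr6}) and (\ref{pr7}). The paper itself gives no proof of this proposition --- it states the result as known and defers to the reference \cite{npps} --- so there is nothing to compare against; your argument is a natural and self-contained way to establish it, and the point you flag as delicate (that $Tu=u\in\mathcal{N}(A)$ and $Tv\in\mathcal{R}(A^{T})$ together certify that $u+Tv$ is the orthogonal splitting of $Tx$) is indeed the only place where care is needed.
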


\begin{proposition}
\label{prop:Tx+Rb} When $A$ and $b$ are as in (\ref{eq:LSS}) and the matrices
$T$, $R$ and $A$ have the properties (\ref{pr1})--(\ref{pr5}), then the affine
operator $Q:{R}^{n}\rightarrow{R}^{n}$ defined by%
\begin{equation}
Q(\cdot):=T(\cdot)+Rb\label{operatorQ}%
\end{equation}
belongs to $\mathcal{F}_{2}$.
\end{proposition}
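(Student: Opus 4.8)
The goal is to show that the affine operator $Q(x) = Tx + Rb$ belongs to $\mathcal{F}_2$, which requires two things: first, that $Q \in \mathcal{F}_1$ (i.e., $Q$ is nonexpansive, and equality in the norm inequality forces both $Q(x)-Q(y)=x-y$ and the orthogonality relation $\langle x-y, Q(y)-y\rangle = 0$); and second, that for every $S \in R^{n\times n}$ the function $g(x) = \|x - SQ(x)\|^2$ attains its global minimum. The plan is to establish the $\mathcal{F}_1$ part using the spectral/subspace structure of $T$ recorded in the preceding proposition, and then to handle the minimization claim by a direct argument on the affine map $x \mapsto x - SQ(x)$.

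\textbf{Step 1: $Q \in \mathcal{F}_1$.} Since $Q$ is affine, $Q(x) - Q(y) = T(x-y)$, so nonexpansiveness of $Q$ is exactly $\|T\| \le 1$, which is (\ref{pr7}). Now suppose $\|T(x-y)\| = \|x-y\|$. Write $w := x - y$ and decompose $w = w_1 + w_2$ with $w_1 \in \mathcal{N}(A)$ and $w_2 \in \mathcal{R}(A^T)$ (an orthogonal decomposition of $R^n$). By (\ref{pr2}) $Tw_1 = w_1$ and by (\ref{pr3}) $Tw_2 \in \mathcal{R}(A^T)$, hence $Tw_1 \perp Tw_2$, so $\|Tw\|^2 = \|w_1\|^2 + \|Tw_2\|^2$, while $\|w\|^2 = \|w_1\|^2 + \|w_2\|^2$. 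Equality forces $\|Tw_2\| = \|w_2\|$. Since $w_2 \in \mathcal{R}(A^T)$, we have $Tw_2 = \tilde{T}w_2$, and (\ref{pr5}) gives $\|\tilde{T}\| < 1$, so this is only possible if $w_2 = 0$. Therefore $w \in \mathcal{N}(A)$ and $Tw = w$, i.e., $Q(x) - Q(y) = x - y$. For the orthogonality relation, compute $Q(y) - y = Ty + Rb - y = (T-I)y + Rb$; by (\ref{pr1}), $T - I = -RA$, so $Q(y) - y = R(b - Ay) \in \mathcal{R}(A^T)$ by (\ref{pr4}). Since $x - y = w \in \mathcal{N}(A) = \mathcal{R}(A^T)^\perp$, we conclude $\langle x-y, Q(y)-y\rangle = 0$. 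This verifies (\ref{def:F1-1:original}) and (\ref{def:F1-2:original}), so $Q \in \mathcal{F}_1$.

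\textbf{Step 2: $g$ attains its global minimum.} Fix $S \in R^{n\times n}$. The map $h(x) := x - SQ(x) = (I - ST)x - SRb$ is affine, say $h(x) = Mx - c$ with $M := I - ST$ and $c := SRb$. Then $g(x) = \|Mx - c\|^2$. This is a nonnegative quadratic, and a continuous quadratic of the form $\|Mx-c\|^2$ always attains its infimum: the infimum equals the squared distance from $c$ to the subspace $\mathcal{R}(M)$, which is attained at any $x$ with $Mx = P_{\mathcal{R}(M)}(c)$. Concretely, $\hat{x} := M^{\dagger}c$ (the Moore–Penrose pseudoinverse solution) satisfies $M\hat{x} = P_{\mathcal{R}(M)}c$, and $g(\hat{x}) = \|P_{\mathcal{R}(M)^\perp}c\|^2 = \min_{x} g(x)$. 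Hence $g$ attains its unconstrained global minimum, and $Q \in \mathcal{F}_2$.

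\textbf{Expected obstacle.} The main technical point is Step 1's equality-case analysis: the orthogonal-splitting argument and the use of (\ref{pr5}) to kill the $\mathcal{R}(A^T)$-component are what make the strict-nonexpansivity clause (\ref{def:F1-2:original}) work, and one must be careful that (\ref{pr3}) only guarantees $Tw_2 \in \mathcal{R}(A^T)$ (so that $Tw_1 \perp Tw_2$) rather than anything stronger. Step 2 is essentially routine once one recognizes $g$ as the squared residual of an affine map — no coercivity is needed because least-squares residuals always attain their minimum — but it is worth noting explicitly that this is precisely why Definition \ref{F2} requires $Q$ to be affine-like in its interaction with arbitrary $S$.
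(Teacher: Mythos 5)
Your proof is correct, and it follows the same two-part structure as the paper (first $Q\in\mathcal{F}_{1}$, then attainment of the minimum of $g$), but several of your sub-arguments are genuinely different and, in places, tighter. For the equality case in Step 1, the paper simply invokes the already-stated property (\ref{pr6}) to conclude $x-y\in\mathcal{N}(A)$; you instead re-derive that fact from scratch via the orthogonal splitting $w=w_{1}+w_{2}$ along $\mathcal{N}(A)\oplus\mathcal{R}(A^{T})$ and the bound $\Vert\tilde{T}\Vert<1$ from (\ref{pr5}) --- a self-contained argument that essentially reproves (\ref{pr6}) rather than citing it. For the orthogonality relation $\langle x-y,Q(y)-y\rangle=0$, your computation $Q(y)-y=(T-I)y+Rb=R(b-Ay)\in\mathcal{R}(A^{T})$, using (\ref{pr1}) and (\ref{pr4}) directly, is cleaner than the paper's route, which decomposes $y$ into its $\mathcal{N}(A)$ and $\mathcal{R}(A^{T})$ components and then appeals to (\ref{pr2}), (\ref{pr3}) and (\ref{pr4}). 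The most substantive difference is Step 2: the paper argues that the Hessian $2(I-ST)^{T}(I-ST)$ is positive semidefinite, hence $g$ is convex ``and attains its global minimum''; as stated this is a small logical gap, since convexity and boundedness below do not by themselves guarantee attainment. Your least-squares argument --- that $g(x)=\Vert Mx-c\Vert^{2}$ with $M=I-ST$, $c=SRb$ attains its infimum at $\hat{x}=M^{\dagger}c$ because the normal equations $M^{T}Mx=M^{T}c$ are always consistent --- closes that gap and is the more rigorous version of what the paper intends.
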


\begin{proof}
We first show that $Q\in\mathcal{F}_{1}$. To prove (\ref{def:F1-1:original})
let $x,y\in R^{n}$, then, from (\ref{pr7})%
\begin{equation}
\left\Vert (Tx+Rb)-(Ty+Rb)\right\Vert =\left\Vert T(x-y)\right\Vert
\leq\left\Vert T\right\Vert \left\Vert x-y\right\Vert \leq\left\Vert
x-y\right\Vert .
\end{equation}
Now for the case that $\left\Vert (Tx+Rb)-(Ty+Rb)\right\Vert =\left\Vert
x-y\right\Vert $ we obtain from (\ref{pr6}) that%
\begin{equation}
x-y\in{\mathcal{N}}(A). \label{apartine na}%
\end{equation}
Using (\ref{pr2}) we get%
\begin{equation}
T(x-y)=x-y\text{ if and only if }(Tx+Rb)-(Ty+Rb)=x-y.
\end{equation}
Representing $y$ as $y=P_{\mathcal{N}(A)}(y)+P_{\mathcal{R}(A^{T})}(y)$ and
using (\ref{pr2}) we obtain%
\begin{equation}
(Ty+Rb)-y=TP_{\mathcal{R}(A^{T})}(y)+Rb-P_{\mathcal{R}(A^{T})}(y),
\end{equation}
which, from (\ref{pr3}) and (\ref{pr4}), gives us%
\begin{equation}
(Ty+Rb)-y\in\mathcal{R}(A^{T}). \label{apartine rat}%
\end{equation}
Therefore, from (\ref{apartine na}) and (\ref{apartine rat}) it follows that%
\begin{equation}
\langle x-y,(Ty+Rb)-y\rangle=0.
\end{equation}

{
The second derivative of $g(\cdot)$, defined by (\ref{eq:gi}), is the constant function $g^{\prime\prime}(\cdot) = 2(I - SQ)^T(I - SQ)$. Since, for any $S\in{R}^{n\times n}$, the matrix $(I - SQ)^T(I - SQ)$ is symmetric and positive-semidefinite, it follows that $g$ is convex and attains its global minimum.
}
\smartqed \end{proof}

\begin{remark}
{
\label{rem14} The FCA Algorithm \ref{alg:conalg}, with $T_{k}$ as in
(\ref{eq:Tk}), $S_{k}=I$, $Q$ as in (\ref{operatorQ}) with $T,R$ as in
(\ref{pr1})-(\ref{pr5}) includes the Kaczmarz (see, e.g., \cite{tanabe}), Cimmino (see, e.g., \cite{npps}) and Diagonal Weighting (see, e.g., \cite{cpopaDW}) algorithms (for details and proofs of this statement see \cite{p12}). We will prove in the following result that another such example is the Landweber method (see, e.g., \cite{l51,p12}).
}
\end{remark}

\begin{proposition}
{
Let $\{\omega_{k}\}_{k=0}^{\infty} \subset R^n$ have the property that there exists a real $\epsilon$ such that 
\begin{equation}\label{omega}
0 < \epsilon \leq \omega_k \leq \frac{2}{\rho(A)^2} - \epsilon, 
\end{equation}
where $\rho(A)$ denotes the spectral norm of $A$. For any $x^0 \in R^n$ and $k \geq 0$ the Landweber iteration is defined by
\begin{equation}\label{landweber}
x^{k + 1} = (I - \omega_k A^T A)x^k + \omega_k A^T b.
\end{equation}
If we denote $I - \omega_k A^T A$ by $T_k$ and $\omega_k A^T$ by $R_k$, then, for every $k\geq 0$, the properties (\ref{pr1})-(\ref{pr5}) hold.
}
\end{proposition}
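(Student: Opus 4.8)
The plan is to verify the three properties (\ref{pr1})--(\ref{pr5}) one by one, with essentially all the work concentrated in (\ref{pr5}). Properties (\ref{pr1}) and (\ref{pr4}) are immediate: the identity $T_k+R_kA=(I-\omega_kA^TA)+\omega_kA^TA=I$ is (\ref{pr1}), and for any $y\in R^m$ we have $R_ky=\omega_kA^Ty=A^T(\omega_ky)\in\mathcal{R}(A^T)$, which is (\ref{pr4}). (Note that $\rho(A)$ sits in a denominator in (\ref{omega}), so implicitly $A\neq 0$ and $\rho(A)>0$.) So those cost nothing.

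For (\ref{pr5}) I would first invoke the standard facts $R^n=\mathcal{N}(A)\oplus\mathcal{R}(A^T)$ (orthogonal sum), $\mathcal{N}(A^TA)=\mathcal{N}(A)$, and --- since $A^TA$ is symmetric --- that both summands are $A^TA$-invariant. Hence $T_k=I-\omega_kA^TA$ leaves $\mathcal{R}(A^T)$ invariant, so $\tilde T_k=T_kP_{\mathcal{R}(A^T)}$ maps $R^n$ into $\mathcal{R}(A^T)$, and $\|\tilde T_k\|$ equals the operator norm of the restriction of $T_k$ to $\mathcal{R}(A^T)$ (because $P_{\mathcal{R}(A^T)}$ carries the unit ball of $R^n$ onto the unit ball of $\mathcal{R}(A^T)$). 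On $\mathcal{R}(A^T)$ the matrix $A^TA$ is symmetric positive definite, with eigenvalues precisely the nonzero eigenvalues $\mu_1\geq\mu_2\geq\cdots\geq\mu_r>0$ of $A^TA$, and $\mu_1=\rho(A)^2$ by the definition of the spectral norm. Diagonalizing $T_k$ on this subspace then gives $\|\tilde T_k\|=\max_{1\leq i\leq r}|1-\omega_k\mu_i|$.

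It remains to show $|1-\omega_k\mu_i|<1$ for each of the finitely many indices $i$ --- equivalently $0<\omega_k\mu_i<2$ --- since the maximum of finitely many numbers each below $1$ is again below $1$. The left inequality follows from $\omega_k\geq\epsilon>0$ and $\mu_i>0$. The right one follows from the upper bound in (\ref{omega}): $\omega_k\mu_i\leq\omega_k\mu_1=\omega_k\rho(A)^2\leq\left(\tfrac{2}{\rho(A)^2}-\epsilon\right)\rho(A)^2=2-\epsilon\,\rho(A)^2<2$. Therefore $1-\omega_k\mu_i\in(-1,1)$ for every $i$, so $\|\tilde T_k\|<1$, which is (\ref{pr5}); since $\omega_k$ obeys (\ref{omega}) for each $k$, this holds for every $k\geq 0$.

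The only non-routine point is (\ref{pr5}), and its crux is twofold: recognizing that the $A^TA$-invariance of $\mathcal{R}(A^T)$ turns the operator-norm bound into an estimate on the eigenvalues $1-\omega_k\mu_i$ of $\tilde T_k$, and then using \emph{both} halves of (\ref{omega}) --- the lower bound $\omega_k\geq\epsilon$ to keep each such eigenvalue strictly below $1$, and the upper bound $\omega_k\leq\tfrac{2}{\rho(A)^2}-\epsilon$ to keep it strictly above $-1$. Everything else is routine linear-algebra bookkeeping.
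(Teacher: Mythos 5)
Your proof is correct and follows essentially the same route as the paper: both reduce (\ref{pr5}) to the identity $\Vert\tilde T_k\Vert=\max_{1\leq i\leq r}|1-\omega_k\sigma_i^2|$ (your $\mu_i$ are the squared singular values) and then use both halves of (\ref{omega}) to place each $1-\omega_k\sigma_i^2$ strictly inside $(-1,1)$, while (\ref{pr1}) and (\ref{pr4}) are immediate in both. The only difference is presentational: the paper computes $\tilde T_k$ explicitly via $P_{\mathcal{R}(A^T)}=A^{\dagger}A$ and the SVD and invokes normality, whereas you diagonalize the restriction of $T_k$ to the $A^TA$-invariant subspace $\mathcal{R}(A^T)$ directly.
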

\begin{proof}
Let $k \geq 0$ be arbitrarily fixed. From the definitions of $T_k$ and $R_k$ it follows that (\ref{pr1}) and (\ref{pr4}) are satisfied. If we denote by $A^{\dagger}$ the (unique) Moore-Penrose pseudoinverse of $A$, then we have the relation $AA^{\dagger}A = A$ (see, e.g., \cite{b96}) and we may write $P_{{\mathcal{R}}(A^{T})} = A^{\dagger}A$ (see, e.g., \cite{bg03}). Consequently, according to (\ref{pr5}), we obtain
\begin{equation}\label{eq:T_landweber}
\tilde{T_k} = (I - \omega_k A^T A) A^{\dagger}A = A^{\dagger}A - \omega_k A^TAA^{\dagger}A = A^{\dagger}A - \omega_k A^T A.
\end{equation}
Consider the SVD decomposition $A = U \Sigma V^T$, where the matrices $U$, $\Sigma$ and $V$ are of dimensions $m \times m$, $m \times n$ and $n \times n$, respectively. We have that 
\begin{equation}\label{res:Sigma}
\Sigma = \left( \begin{array}{cc}
\Sigma_1 & 0 \\
0 & 0 
\end{array}\right), {\rm ~ with ~} \Sigma_1 = \operatorname{diag}\left(\sigma_1, \sigma_2, \dots, \sigma_r \right), 
\end{equation}
where $r$ is the rank of $A$ and $\sigma_1 \geq \sigma_2 \geq \dots \geq \sigma_r > 0$. Then, the pseudoinverse has the form (for details and proofs see \cite{b96})
\begin{equation}\label{res:pinv}
A^{\dagger} = V \left( \begin{array}{cc}
\Sigma_1^{-1} & 0 \\
0 & 0 
\end{array}\right) U^T.
\end{equation}
After a simple computation, (\ref{eq:T_landweber}), (\ref{res:Sigma}) and (\ref{res:pinv}) yield
\begin{equation}
\tilde{T_k}= V E V^T,
\end{equation}
where 
\begin{equation}
E = \left( \begin{array}{cc}
E_1 & 0 \\
0 & 0 
\end{array}\right), {\rm ~ with ~} E_1 = \operatorname{diag}\left(1 - \omega_k \sigma_1^2, 1 - \omega_k\sigma_2^2, \dots, 1 - \omega_k\sigma_r^2 \right).
\end{equation}
Therefore, since $\tilde{T_k}$ is normal, we obtain
\begin{equation}
\left\Vert \tilde{T_k} \right\Vert = \rho(V E V^T) = \rho(V^T V E) = \rho(E) = \max_{i \in \{1, 2, \dots, r\}} | 1 - \omega_k \sigma_i^2|,
\end{equation}
which together with (\ref{omega}) gives
\begin{equation}
\left\Vert \tilde{T_k} \right\Vert < 1,
\end{equation}
and the proof is complete.
\end{proof}

\begin{lemma}
\label{egalitate_multimi}Let $\operatorname{Fix}(Q)$ be the fixed points set
of the operator $Q$ defined by (\ref{operatorQ}), with $T$ and $R$ matrices of
dimensions $n\times n$ and $n\times m$, respectively, having the properties
(\ref{pr1})--(\ref{pr5}). The following property then holds%
\begin{equation}
\operatorname{Fix}(Q)=\{x+\Delta\mid x\in LSS(A;b)\}, \label{23}%
\end{equation}
where%
\begin{equation}
\Delta=(I-\tilde{T})^{-1}RP_{{\mathcal{N}}(A^{T})}(b). \label{delta}%
\end{equation}

\end{lemma}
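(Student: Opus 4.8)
The plan is to solve the fixed-point equation $Q(x)=x$, equivalently $(I-T)x=Rb$, by splitting every vector along the orthogonal decomposition ${R}^{n}={\mathcal{N}}(A)\oplus{\mathcal{R}}(A^{T})$ and using the properties (\ref{pr1})--(\ref{pr5}) to reduce it to an invertible linear equation on the subspace ${\mathcal{R}}(A^{T})$.

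First I would collect the elementary facts that are needed. For $w\in{\mathcal{R}}(A^{T})$ one has $P_{{\mathcal{R}}(A^{T})}w=w$, so $\tilde{T}w=Tw$ and hence $(I-\tilde{T})w=(I-T)w$; moreover $\tilde{T}$ maps ${\mathcal{R}}(A^{T})$ into itself by (\ref{pr3}). For $u\in{\mathcal{N}}(A)$, property (\ref{pr2}) gives $Tu=u$, hence $(I-T)u=0$. Since $\Vert\tilde{T}\Vert<1$ by (\ref{pr5}), the operator $I-\tilde{T}$ is invertible on ${R}^{n}$, and as it carries ${\mathcal{R}}(A^{T})$ into itself it restricts to a bijection of ${\mathcal{R}}(A^{T})$, so $(I-\tilde{T})^{-1}$ preserves ${\mathcal{R}}(A^{T})$ as well; together with (\ref{pr4}), which gives $R({R}^{m})\subseteq{\mathcal{R}}(A^{T})$, this yields $\Delta\in{\mathcal{R}}(A^{T})$. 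Finally, $P_{{\mathcal{R}}(A)}(b)=Ax_{LS}$ with $x_{LS}\in{\mathcal{R}}(A^{T})$, and $LSS(A;b)=x_{LS}+{\mathcal{N}}(A)$.

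The reduction then goes as follows. Writing $x=u+v$ with $u=P_{{\mathcal{N}}(A)}(x)$ and $v=P_{{\mathcal{R}}(A^{T})}(x)$, the facts above give $(I-T)x=(I-T)u+(I-T)v=(I-\tilde{T})v$. For the right-hand side I would split $b=P_{{\mathcal{N}}(A^{T})}(b)+P_{{\mathcal{R}}(A)}(b)$ and use $RA=I-T$ from (\ref{pr1}) together with $Ax_{LS}=P_{{\mathcal{R}}(A)}(b)$ and $x_{LS}\in{\mathcal{R}}(A^{T})$ to obtain $Rb=RP_{{\mathcal{N}}(A^{T})}(b)+RAx_{LS}=RP_{{\mathcal{N}}(A^{T})}(b)+(I-\tilde{T})x_{LS}$. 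Hence $Q(x)=x$ is equivalent to $(I-\tilde{T})(v-x_{LS})=RP_{{\mathcal{N}}(A^{T})}(b)$, and since $I-\tilde{T}$ is invertible on ${R}^{n}$, this is in turn equivalent to $v-x_{LS}=(I-\tilde{T})^{-1}RP_{{\mathcal{N}}(A^{T})}(b)=\Delta$, i.e.\ $v=x_{LS}+\Delta$, with $u$ otherwise unconstrained in ${\mathcal{N}}(A)$. Letting $u$ range over ${\mathcal{N}}(A)$ and using $\Delta\in{\mathcal{R}}(A^{T})$, the vectors $x=u+x_{LS}+\Delta$ describe exactly $(x_{LS}+{\mathcal{N}}(A))+\Delta=LSS(A;b)+\Delta$, which is (\ref{23}).

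Most of this is routine bookkeeping with the two orthogonal splittings. The point that deserves care is the handling of $(I-\tilde{T})^{-1}$: one must verify that $\Delta$ actually lies in ${\mathcal{R}}(A^{T})$ and that reducing the fixed-point equation to one on ${\mathcal{R}}(A^{T})$ is legitimate, so that unique solvability there determines the ${\mathcal{R}}(A^{T})$-component of a fixed point while leaving its ${\mathcal{N}}(A)$-component entirely free — which is precisely what produces the affine family on the right-hand side of (\ref{23}).
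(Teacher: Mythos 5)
Your proof is correct, and it reaches the conclusion by a genuinely more streamlined route than the paper's. Both arguments rest on the same two ingredients --- the orthogonal splitting ${R}^{n}={\mathcal{N}}(A)\oplus{\mathcal{R}}(A^{T})$ together with properties (\ref{pr1})--(\ref{pr5}), and the invertibility of $I-\tilde{T}$ coming from (\ref{pr5}) --- but you organize them as a single chain of equivalences: $Q(x)=x$ reduces to the linear equation $(I-\tilde{T})(v-x_{LS})=RP_{{\mathcal{N}}(A^{T})}(b)$ for the ${\mathcal{R}}(A^{T})$-component $v$, with the ${\mathcal{N}}(A)$-component left entirely free. The paper instead proves the two inclusions separately: for $LSS(A;b)+\Delta\subseteq\operatorname{Fix}(Q)$ it computes $RA\Delta=RP_{{\mathcal{N}}(A^{T})}(b)$ by expanding $(I-\tilde{T})^{-1}$ as the Neumann series $\sum_{k}\tilde{T}^{k}$ and observing that the $TP_{{\mathcal{N}}(A)}$ term annihilates vectors of ${\mathcal{R}}(A^{T})$; for the reverse inclusion it iterates the fixed-point identity to get $P_{{\mathcal{R}}(A^{T})}(x)=\tilde{T}^{k}P_{{\mathcal{R}}(A^{T})}(x)+(\sum_{i=0}^{k-1}\tilde{T}^{i})Rb$ and passes to the limit. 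Your route obtains both inclusions at once and replaces the series and limit arguments by the bare fact that $I-\tilde{T}$ is a bijection; the identity $Rb=RP_{{\mathcal{N}}(A^{T})}(b)+(I-\tilde{T})x_{LS}$, which the paper uses only implicitly when it asserts $(I-\tilde{T})^{-1}Rb=x_{LS}+\Delta$, is what makes this work. You are also right to single out $\Delta\in{\mathcal{R}}(A^{T})$ as the point needing care --- it is exactly what guarantees that prescribing the ${\mathcal{R}}(A^{T})$-component to be $x_{LS}+\Delta$ produces the affine set $LSS(A;b)+\Delta$ --- and your justification via the restriction of $I-\tilde{T}$ to the invariant subspace ${\mathcal{R}}(A^{T})$ is sound.
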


\begin{proof}
For $x\in LSS(A;b)$ we know that%
\begin{equation}
Ax=P_{{\mathcal{R}}(A)}(b), \label{r1}%
\end{equation}
thus, by using (\ref{pr1}) we get%
\begin{equation}
(I-T)(x+\Delta)=RA(x+\Delta)=RP_{{\mathcal{R}}(A)}(b)+RA\Delta. \label{r2}%
\end{equation}
$\Vert\tilde{T}\Vert$ is the spectral norm of $\tilde{T}$, thus a matrix norm.
Since, by (\ref{pr5}), $\Vert\tilde{T}\Vert<1$, it follows from
\cite[Corollary 5.6.16 on page 301]{Horn} that $I-\tilde{T}$ is invertible and%
\begin{equation}
(I-\tilde{T})^{-1}=\sum_{k=0}^{\infty}{\tilde{T}^{k}}. \label{r3}%
\end{equation}
Using (\ref{r1}), (\ref{pr5}), (\ref{pr4}), (\ref{pr3}) and (\ref{r3}) we
obtain%
\begin{align}
RA\Delta &  =(I-T)\Delta=(I-\tilde{T}-TP_{{\mathcal{N}}(A)})(I-\tilde{T}%
)^{-1}RP_{{\mathcal{N}}(A^{T})}(b)\nonumber\\
&  =RP_{{\mathcal{N}}(A^{T})}(b)-TP_{{\mathcal{N}}(A)}\sum_{k=0}^{\infty
}{\tilde{T}^{k}}RP_{{\mathcal{N}}(A^{T})}(b)=RP_{{\mathcal{N}}(A^{T})}(b).
\end{align}
In view of (\ref{r2}) we then obtain%
\begin{equation}
x+\Delta=T(x+\Delta)+Rb,
\end{equation}
which implies that $\{x+\Delta\mid x\in LSS(A;b)\}\subseteq\operatorname{Fix}%
(Q)$.

For the reverse inclusion we consider $x\in\operatorname{Fix}(Q)$, i.e., $x=Tx+Rb$, which allows us to write%
\begin{equation}
P_{{\mathcal{R}}(A^{T})}(x)+P_{{\mathcal{N}}(A)}(x)=TP_{{\mathcal{R}}(A^{T}%
)}(x)+TP_{{\mathcal{N}}(A)}(x)+Rb.
\end{equation}
From (\ref{pr2}) and (\ref{pr4}) we get in the above equality%
\begin{align}
P_{{\mathcal{R}}(A^{T})}(x)  &  =\tilde{T}P_{{\mathcal{R}}(A^{T}%
)}(x)+Rb=\tilde{T}(\tilde{T}P_{{\mathcal{R}}(A^{T})}(x)+Rb)+ Rb\nonumber\\
&  =\cdots=\tilde{T}^{k}P_{{\mathcal{R}}(A^{T})}(x)+(\sum_{i=0}^{k-1}\tilde
{T}^{i})Rb.
\end{align}
By taking the limit as $k\rightarrow\infty,$ {from (\ref{pr5}) and (\ref{r3})} we arrive at%
\begin{equation}
P_{{\mathcal{R}}(A^{T})}(x)=(I-\tilde{T})^{-1}Rb=x_{LS}+\Delta,
\end{equation}
which, in turn, implies%
\begin{equation}
Ax=A(P_{{\mathcal{R}}(A^{T})}(x)+P_{{\mathcal{N}}(A)}(x))=Ax_{LS}%
+A\Delta=P_{{\mathcal{R}}(A)}(b)+A\Delta,
\end{equation}
thus, $x-\Delta\in LSS(A;b)$, i.e.,%
\begin{equation}
\operatorname{Fix}(Q)-\Delta\subseteq LSS(A;b),
\end{equation}
which is equivalent to
\begin{equation}
\operatorname{Fix}(Q)\subseteq LSS(A;b)+\Delta.
\end{equation}
Since we have also proved that $\Delta+LSS(A;b)\subseteq\operatorname{Fix}%
(Q)$, the equality
\begin{equation}
\operatorname{Fix}(Q)=LSS(A;b)+\Delta
\end{equation}
follows and the proof is complete. 
\smartqed \end{proof}

{For $Q$ chosen as in Proposition \ref{prop:Tx+Rb}, we present in the following an example of a family of strictly nonexpansive constraining
operators $\{S_{k}\}_{k=0}^{\infty}$ such that $\{T_{k}\}_{k=0}^{\infty}$, defined according to (\ref{eq:Tk}), satisfies (\ref{F}) and Condition \ref{ipoteza_suplimentara}, making it
applicable for the convergence theory of the FCA (Algorithm \ref{alg:conalg}).
}

The family consists of metric projection operators onto closed and convex sets
in ${R}^{n}$ with an additional \textquotedblleft image inclusion
assumption\textquotedblright. The metric projection operator $C$ onto the box
$[a,b]=[a_{1},b_{1}]\times\cdots\times\lbrack a_{n},b_{n}]\subset R^{n}$ is
defined by its $i$-th component, $i=1,2,\ldots,n,$ as%
\begin{equation}
(Cx)_{i}:=\left\{
\begin{array}
[c]{ll}%
x_{i}, & ~\text{~if~}~x_{i}\in\lbrack a_{i},b_{i}],\\
a_{i}, & ~\text{~if}~~x_{i}<a_{i},\\
b_{i}, & ~~\text{if}~~x_{i}>b_{i}.
\end{array}
\right.  \label{ex1}%
\end{equation}

It is known that such an operator is strictly nonexpansive (see, e.g.,
\cite{DPI90,p12}).

\begin{lemma}
\label{incluziune}Let $C$ and $\overline{C}$ be two metric projection
operators onto the boxes $[a,b]=[a_{1},b_{1}]\times\cdots\times\lbrack
a_{n},b_{n}]\subset R^{n}$ and $[\overline{a},\overline{b}]=[\overline{a}%
_{1},\overline{b}_{1}]\times\cdots\times\lbrack\overline{a}_{n},\overline
{b}_{n}]\subset R^{n},$ respectively, defined as in (\ref{ex1}). If the image
sets $[\overline{a},\overline{b}]\subset [a,b],$ then for any $y\in [\overline{a},\overline{b}]$ the
following inequality holds%
\begin{equation}
\left\Vert \overline{C}z-y\right\Vert \leq\left\Vert Cz-y\right\Vert
,~\text{{for all }}z\in R^{n}.
\end{equation}

\end{lemma}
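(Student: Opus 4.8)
The plan is to exploit the fact that the metric projection onto a box decomposes into $n$ independent one-dimensional clampings, one per coordinate, so that the asserted inequality reduces — after squaring and summing over the coordinates — to an elementary fact about projecting a single real number onto a subinterval.

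First I would record the coordinatewise form of (\ref{ex1}): for each $i=1,2,\ldots,n$ we have $(Cz)_{i}=\min\{b_{i},\max\{a_{i},z_{i}\}\}$ and $(\overline{C}z)_{i}=\min\{\overline{b}_{i},\max\{\overline{a}_{i},z_{i}\}\}$. The hypothesis $[\overline{a},\overline{b}]\subset[a,b]$ says precisely that $a_{i}\le\overline{a}_{i}\le\overline{b}_{i}\le b_{i}$ for every $i$, and $y\in[\overline{a},\overline{b}]$ says $\overline{a}_{i}\le y_{i}\le\overline{b}_{i}$ for every $i$.

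Next I would prove the scalar inequality $|(\overline{C}z)_{i}-y_{i}|\le|(Cz)_{i}-y_{i}|$ for each fixed $i$ by splitting into the three cases $z_{i}<\overline{a}_{i}$, $\overline{a}_{i}\le z_{i}\le\overline{b}_{i}$, and $z_{i}>\overline{b}_{i}$. In the middle case both projections equal $z_{i}$ (here one uses $a_{i}\le\overline{a}_{i}$ and $\overline{b}_{i}\le b_{i}$), so equality holds. In the first case $(\overline{C}z)_{i}=\overline{a}_{i}$ while $(Cz)_{i}=\max\{a_{i},z_{i}\}\le\overline{a}_{i}$, whence $(Cz)_{i}\le\overline{a}_{i}\le y_{i}$ and the distance to $y_{i}$ cannot increase when one moves from $(Cz)_{i}$ up to $\overline{a}_{i}$; the third case is symmetric. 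Equivalently, one can first observe that the nesting $a_{i}\le\overline{a}_{i}\le\overline{b}_{i}\le b_{i}$ in every coordinate forces the operator identity $\overline{C}z=\overline{C}(Cz)$, and then invoke the standard property that the metric projection onto a closed convex set containing $y$ does not increase the distance to $y$ (applied to the set $[\overline{a},\overline{b}]$ and the point $Cz$).

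Finally I would square the scalar inequalities and sum over $i$, obtaining
\[
\left\Vert \overline{C}z-y\right\Vert ^{2}=\sum_{i=1}^{n}\bigl((\overline{C}z)_{i}-y_{i}\bigr)^{2}\le\sum_{i=1}^{n}\bigl((Cz)_{i}-y_{i}\bigr)^{2}=\left\Vert Cz-y\right\Vert ^{2},
\]
and taking square roots yields the claim. There is no substantial obstacle here; the only point requiring care is the bookkeeping in the case analysis establishing the scalar inequality (equivalently the identity $\overline{C}=\overline{C}\circ C$), and checking that the \textquotedblleft image inclusion\textquotedblright\ hypothesis $[\overline{a},\overline{b}]\subset[a,b]$ is invoked exactly where the interval ordering $a_{i}\le\overline{a}_{i}\le\overline{b}_{i}\le b_{i}$ is used.
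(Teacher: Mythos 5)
Your proposal is correct and, in its second formulation, coincides with the paper's own proof: the paper likewise establishes the coordinatewise identity $\overline{C}Cz=\overline{C}z$, notes $y=\overline{C}y$, and concludes via the nonexpansivity of $\overline{C}$ that $\left\Vert \overline{C}z-y\right\Vert =\left\Vert \overline{C}Cz-\overline{C}y\right\Vert \leq\left\Vert Cz-y\right\Vert$. Your primary three-case scalar analysis is just a more elementary unpacking of the same idea and is also valid.
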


\begin{proof}
From the inclusion $[\overline{a},\overline{b}]\subset [a,b]$ we get that 
\begin{equation}\label{lemma:incluziune1}
[\overline
{a}_{i},\overline{b}_{i}]\subset\lbrack a_{i},b_{i}] \text{ for all } i=1,2,\ldots ,n.
\end{equation} 
If $z\in R^{n}$ is arbitrarily fixed, from (\ref{lemma:incluziune1}) it results $P_{[\overline
{a}_{i},\overline{b}_{i}]}P_{[a_{i},b_{i}]}z_i = P_{[\overline
{a}_{i},\overline{b}_{i}]}z_i$, for all $i=1,2,\ldots ,n$. Therefore,
\begin{equation}\label{lemma:incluziune2}
\overline{C}Cz = P_{[\overline{a},\overline{b}]}P_{[a,b]}z = P_{[\overline{a},\overline{b}]}z = \overline{C}z.
\end{equation}
For any $y \in [\overline{a},\overline{b}]$, we have
\begin{equation}\label{lemma:incluziune3}
y = \overline{C}y.
\end{equation}
Since the linear mappings $C$ and $\overline{C}$ are strictly nonexpansive, (\ref{lemma:incluziune2}) and (\ref{lemma:incluziune3}) yield 
\begin{equation}
\left\Vert \overline{C}z-y\right\Vert = \left\Vert \overline{C}Cz-\overline{C}y\right\Vert \leq \left\Vert Cz-y\right\Vert
,~\text{{for all }}z\in R^{n}
\end{equation}
and the proof is complete.
\smartqed \end{proof}

Consider now a family $\{C_{k}\}_{k=0}^{\infty}$ of operators, where for each
$k\geq0$, $C_{k}$ is a metric projection operator onto the $k$-th box
$[a_{k},b_{k}]\subseteq R^{n},$ as defined in (\ref{ex1}). For this family we
define the sets%
\begin{equation}
{\mathcal{V}}_{k}^{\ast}:=\{z\in\text{Im}(C_{k})\mid~z-\Delta\in LSS(A;b)\},
\label{vstark}%
\end{equation}
and assume that for all $k\geq0,$%
\begin{equation}
{\mathcal{V}}_{k}^{\ast}\neq\emptyset. \label{vstark_nevide}%
\end{equation}

We develop next a sufficient condition for this family $\{C_{k}\}_{k=0}%
^{\infty}$ {to guarantee that the sequence $\{C_{k}Q\}_{k=0}^{\infty}$ satisfies (\ref{F}) and Condition
\ref{ipoteza_suplimentara}, where $Q$ is defined according to Proposition \ref{prop:Tx+Rb}}.

\begin{lemma}
\label{vstarinf_nevida}Let $\{C_{k}\}_{k=0}^{\infty}$ be a family of metric
projection operators onto the $k$-th box $[a_{k},b_{k}]\subset R^{n},$ as
defined in (\ref{ex1}) and assume that ${\mathcal{V}}_{k}^{\ast}\neq\emptyset$
for all $k\geq0$. If for every $\ell\geq0$ there exists a $k(\ell)\geq\ell$
such that%
\begin{equation}
\text{ }Im(C_{k+1})\subseteq Im(C_{\ell}),\text{{ for all }}k\geq k(\ell),
\label{conditie}%
\end{equation}
then the infinite intersection set%
\begin{equation}
{\mathcal{V}}_{\infty}^{\ast}:=\cap_{k=0}^{\infty}{\mathcal{V}}_{k}^{\ast},
\label{vstarinf}%
\end{equation}
is nonempty.
\end{lemma}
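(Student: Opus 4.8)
The plan is to reduce the statement to Cantor's intersection theorem for a nested sequence of nonempty compact sets. First I would rewrite each $\mathcal{V}_{k}^{\ast}$ in a more workable form. Since $Im(C_{k})$ is exactly the box $[a_{k},b_{k}]$, it is a compact subset of $R^{n}$; and, by Lemma \ref{egalitate_multimi}, the requirement $z-\Delta\in LSS(A;b)$ is equivalent to $z\in\operatorname{Fix}(Q)$ (here $\Delta$ is precisely the vector (\ref{delta}) appearing in (\ref{vstark}), so the identification is legitimate). Hence $\mathcal{V}_{k}^{\ast}=Im(C_{k})\cap\operatorname{Fix}(Q)$. As $\operatorname{Fix}(Q)=LSS(A;b)+\Delta$ is an affine subspace, it is closed, so each $\mathcal{V}_{k}^{\ast}$ is compact, and it is nonempty by assumption (\ref{vstark_nevide}).

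Next I would extract from $\{C_{k}\}_{k=0}^{\infty}$ a subsequence along which the image boxes are genuinely nested. Applying (\ref{conditie}) with $\ell$ equal to a given index, one gets a $k(\ell)\geq\ell$ with $Im(C_{k+1})\subseteq Im(C_{\ell})$ for all $k\geq k(\ell)$, equivalently $Im(C_{j})\subseteq Im(C_{\ell})$ for all $j\geq k(\ell)+1$. So I would set $n_{0}:=0$ and, recursively, $n_{i+1}:=k(n_{i})+1$; then $n_{i+1}>n_{i}$ (because $k(n_{i})\geq n_{i}$) and $Im(C_{n_{i+1}})\subseteq Im(C_{n_{i}})$, whence $\mathcal{V}_{n_{i+1}}^{\ast}\subseteq\mathcal{V}_{n_{i}}^{\ast}$. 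Thus $\{\mathcal{V}_{n_{i}}^{\ast}\}_{i=0}^{\infty}$ is a decreasing chain of nonempty compact sets, and Cantor's intersection theorem produces a point $z\in\bigcap_{i=0}^{\infty}\mathcal{V}_{n_{i}}^{\ast}$.

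Finally I would verify that this single point $z$ already lies in every $\mathcal{V}_{\ell}^{\ast}$. Indeed $z\in\operatorname{Fix}(Q)$, so $z-\Delta\in LSS(A;b)$. Fix $\ell\geq0$; since the integers $n_{i}$ strictly increase, there is an $i$ with $n_{i}\geq k(\ell)+1$, hence $Im(C_{n_{i}})\subseteq Im(C_{\ell})$ by (\ref{conditie}), and since $z\in\mathcal{V}_{n_{i}}^{\ast}\subseteq Im(C_{n_{i}})$ we get $z\in Im(C_{\ell})$. Therefore $z\in\mathcal{V}_{\ell}^{\ast}$, and as $\ell$ was arbitrary, $z\in\mathcal{V}_{\infty}^{\ast}$, proving (\ref{vstarinf}) is nonempty.

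The only delicate point is the index bookkeeping: one must be careful that the recursion $n_{i+1}=k(n_{i})+1$ yields a strictly increasing sequence and that the inclusions $Im(C_{n_{i+1}})\subseteq Im(C_{n_{i}})$ chain up, so that both the nestedness used in Cantor's theorem and the cofinality used in the last step are in force. Everything else — compactness of the boxes, closedness of $\operatorname{Fix}(Q)$, and the passage $\mathcal{V}_{k}^{\ast}=Im(C_{k})\cap\operatorname{Fix}(Q)$ via Lemma \ref{egalitate_multimi} — is routine.
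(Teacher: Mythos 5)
Your proof is correct and rests on the same idea as the paper's: derive nestedness from (\ref{conditie}), note that the sets involved are closed and bounded, and invoke Cantor's Intersection Theorem. The only difference is bookkeeping: the paper intersects the partial families $B_{\ell}:=\cap_{i=0}^{\ell}{\mathcal{V}}_{i}^{\ast}$ and shows each is nonempty by taking $\overline{k}=\max\{k(0),\dots,k(\ell)\}$ so that ${\mathcal{V}}_{\overline{k}+1}^{\ast}$ sits inside all of them, whereas you extract a genuinely nested subsequence ${\mathcal{V}}_{n_{i}}^{\ast}$ and then recover membership in every ${\mathcal{V}}_{\ell}^{\ast}$ by a cofinality argument -- both routes are sound and essentially equivalent.
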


\begin{proof}
We construct a decreasing nested sequence of nonempty closed and bounded sets
in order to apply Cantor's Intersection Theorem (see, e.g., \cite{s63}).
Defining for every $\ell\in N$ the set $B_{\ell}:=\cap_{i=0}^{\ell
}{\mathcal{V}}_{i}^{\ast}$, it is clear that for every $\ell\geq0$,
$B_{\ell+1}\subseteq B_{\ell}$.
{Moreover, since $B_0 \subseteq [a_0, b_0]$, it is bounded}. 

Since $LSS(A;b)$ is closed and, for each $\ell\geq0$, $Im(C_{\ell})$
is closed, $B_{\ell}$ is also closed. Next we show that $B_{\ell}$ is nonempty
for every $\ell\geq0$. Take an arbitrarily fixed $\ell\in N$, and $k(i)\geq i$
for all $i\in\{0, 1, 2, \dots,\ell\},$ as in (\ref{conditie}), and define%
\begin{equation}
\overline{k}:=\max\{k(0), k(1),k(2), \dots, k(\ell)\}.
\end{equation}

Using the definition (\ref{vstark}) we obtain%
\begin{equation}
{\mathcal{V}}_{k+1}^{\ast}\subseteq{\mathcal{V}}_{i}^{\ast},\text{{ for all }
}k\geq\overline{k}~\text{and}~i\in\{0, 1, 2, \dots\ell\},
\end{equation}
which implies that%
\begin{equation}
\cap_{i=0}^{\ell}{\mathcal{V}}_{i}^{\ast}\neq\emptyset.
\label{intersectie_nevida}%
\end{equation}

Since $\cap_{k=0}^{\infty}{\mathcal{V}}_{k}^{\ast}=\cap_{\ell=0}^{\infty
}B_{\ell},$ applying Cantor's Intersection Theorem yields ${\mathcal{V}%
}_{\infty}^{\ast}\neq\emptyset$. 
\smartqed \end{proof}

\begin{remark}
\label{remark:question}The condition (\ref{conditie}) from the previous lemma
is equivalent to the following component-wise inequality on the sequences
$\{a_{k}\}_{k=0}^{\infty},\{b_{k}\}_{k=0}^{\infty}\subset R^{n}$: for all
$\ell\geq0$ there exists a $k(\ell)\geq\ell$ such that%
\begin{equation}
a_{\ell}\leq a_{k+1}\leq b_{k+1}\leq b_{\ell},\text{{ for all }}k\geq
k(\ell).
\end{equation}
\end{remark}

{
The metric projection operators, like those in (\ref{ex1}), are frequently used
for constraining purposes in image reconstruction problems that are formulated according to (\ref{eq:LSS}). 
As mentioned at the beginning of this paper, the idea of using iteration independent constraints was previously examined, see Subsection \ref{sec:prev}. Our purpose is to explore a procedure of adapting the constraining function at each step of the algorithm to obtain a better approximation of the scanned image. 
The meaning of (\ref{conditie}) in practice is that the image
of every constraining function should be built from a priori knowledge to
contain the exact solution (the original image), however, $\{\operatorname{Im}(C_{k})\}_{k=0}^{\infty}$ should not necessarily be a decreasing nested sequence.

In \cite{pp13} such a family of constraining operators is used to solve a Tomographic Particle Image Velocimetry (TomoPIV) problem (see \cite{ps09} for more details), which reduces to reconstructing a binary vector. The difficulty of this problem is to find the number and the   approximate location of the particles, corresponding to values of one in the solution. When applying a constant $[0,1]^{n}$ constraining interval, the obtained approximation usually contains ``ghost'' particles. The authors observed in the aforementioned paper that, if as the iterations progress, the intervals are focused on zero or one values by using an iteration-adaptive constraining process, these ``ghosts'' are eliminated and the correct number of particles is found.
}

\begin{proposition}
\label{prop:Ck} For a family $\{C_{k}\}_{k=0}^{\infty}$ of box constraining
operators like those in (\ref{ex1}) with the properties (\ref{vstark_nevide})
and (\ref{conditie}), and an operator $Q$ defined by (\ref{operatorQ}), with
$T$ and $R$ matrices having the properties (\ref{pr1})--(\ref{pr5}), the
assumption (\ref{F}) and Condition \ref{ipoteza_suplimentara} are satisfied.
\end{proposition}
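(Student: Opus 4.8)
The plan is to first make explicit the fixed point set of each composed operator $T_{k}:=C_{k}Q$, and then to verify separately the two ingredients needed for the convergence theory of the FCA (Algorithm \ref{alg:conalg}): the nonemptiness (\ref{F}) of the common fixed point set $F:=\cap_{k\geq 0}\operatorname{Fix}(T_{k})$, and Condition \ref{ipoteza_suplimentara}. Throughout, $Q$ is strictly nonexpansive because $Q\in\mathcal{F}_{2}$ by Proposition \ref{prop:Tx+Rb}, and each $C_{k}$ is strictly nonexpansive with $\operatorname{Fix}(C_{k})=Im(C_{k})=[a_{k},b_{k}]$.

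For the first ingredient I would show that $\operatorname{Fix}(T_{k})=\mathcal{V}_{k}^{\ast}$ for every $k\geq 0$, where $\mathcal{V}_{k}^{\ast}$ is the set in (\ref{vstark}). By Lemma \ref{egalitate_multimi} one has $\operatorname{Fix}(Q)=LSS(A;b)+\Delta$, so $\operatorname{Fix}(C_{k})\cap\operatorname{Fix}(Q)$ is exactly $\mathcal{V}_{k}^{\ast}$. If $w\in\mathcal{V}_{k}^{\ast}$ then $Q(w)=w$ and $C_{k}(w)=w$, hence $T_{k}(w)=w$, which gives the inclusion $\mathcal{V}_{k}^{\ast}\subseteq\operatorname{Fix}(T_{k})$. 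For the reverse inclusion I would pick any $w\in\mathcal{V}_{k}^{\ast}$ --- nonempty by (\ref{vstark_nevide}) --- and any $x\in\operatorname{Fix}(T_{k})$, and run the Fej\'{e}r-type chain
\[
\left\Vert x-w\right\Vert =\left\Vert C_{k}Q(x)-C_{k}(w)\right\Vert \leq\left\Vert Q(x)-Q(w)\right\Vert =\left\Vert Q(x)-w\right\Vert \leq\left\Vert x-w\right\Vert ,
\]
which forces equalities throughout; then $\left\Vert Q(x)-Q(w)\right\Vert =\left\Vert x-w\right\Vert$, and (\ref{def:SNE2}) applied to $Q$ yields $Q(x)=x$, so $x=C_{k}(Q(x))=C_{k}(x)\in Im(C_{k})$ and therefore $x\in\mathcal{V}_{k}^{\ast}$. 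Consequently $F=\cap_{k\geq 0}\mathcal{V}_{k}^{\ast}=\mathcal{V}_{\infty}^{\ast}$, which is nonempty by Lemma \ref{vstarinf_nevida} since the hypotheses (\ref{vstark_nevide}) and (\ref{conditie}) are in force; thus (\ref{F}) holds.

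For Condition \ref{ipoteza_suplimentara}, fix $\ell\geq 0$ and let $k(\ell)\geq\ell$ be the index provided by (\ref{conditie}), so that $Im(C_{k+1})\subseteq Im(C_{\ell})$ for all $k\geq k(\ell)$. Let $\{x^{k}\}_{k=0}^{\infty}$ be any sequence generated by Algorithm \ref{alg:gen}, let $z\in F$, and let $k\geq k(\ell)$. Since $F=\mathcal{V}_{\infty}^{\ast}\subseteq\mathcal{V}_{k+1}^{\ast}\subseteq Im(C_{k+1})$, the point $z$ lies in the box $[a_{k+1},b_{k+1}]\subseteq[a_{\ell},b_{\ell}]$, so Lemma \ref{incluziune} (with $\overline{C}=C_{k+1}$, $C=C_{\ell}$, $y=z$, taken at the point $Q(x^{k})$) yields
\[
\left\Vert T_{k+1}(x^{k})-z\right\Vert =\left\Vert C_{k+1}Q(x^{k})-z\right\Vert \leq\left\Vert C_{\ell}Q(x^{k})-z\right\Vert =\left\Vert T_{\ell}(x^{k})-z\right\Vert ,
\]
which is precisely what Condition \ref{ipoteza_suplimentara} requires.

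The step I expect to be the main obstacle is the fixed-point-set identification $\operatorname{Fix}(C_{k}Q)=\operatorname{Fix}(C_{k})\cap\operatorname{Fix}(Q)=\mathcal{V}_{k}^{\ast}$: this is where the strict nonexpansivity of $Q$ is genuinely used (through the equality case (\ref{def:SNE2})), and where the assumption (\ref{vstark_nevide}) is needed so that the Fej\'{e}r chain can be anchored at an element of $\mathcal{V}_{k}^{\ast}$. Once this identification is in hand, the nonemptiness of $F$ is immediate from Lemma \ref{vstarinf_nevida}, and Condition \ref{ipoteza_suplimentara} follows from Lemma \ref{incluziune} combined with the inclusion $F\subseteq Im(C_{k+1})$; both of these reductions are essentially bookkeeping.
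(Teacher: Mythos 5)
Your proof is correct and follows the same two-pronged strategy as the paper: Lemma \ref{vstarinf_nevida} gives $\mathcal{V}_{\infty}^{\ast}\neq\emptyset$, whose elements are fixed by every $C_{k}$ and (via Lemma \ref{egalitate_multimi}) by $Q$, hence (\ref{F}); and Lemma \ref{incluziune}, applied with $y=z\in F\subseteq Im(C_{k+1})\subseteq Im(C_{\ell})$ at the point $Q(x^{k})$, gives Condition \ref{ipoteza_suplimentara}. The one place you diverge is the full identification $\operatorname{Fix}(C_{k}Q)=\mathcal{V}_{k}^{\ast}$, which you flag as the main obstacle; the paper avoids it entirely, since for (\ref{F}) only the easy inclusion $\mathcal{V}_{\infty}^{\ast}\subseteq F$ is needed, and the inclusion $F\subseteq Im(C_{k+1})$ used in the second part is immediate from $z=C_{k+1}(Q(z))$ for any $z\in F$, with no appeal to the sets $\mathcal{V}_{k}^{\ast}$. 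Your Fej\'{e}r-chain argument for the reverse inclusion $\operatorname{Fix}(C_{k}Q)\subseteq\mathcal{V}_{k}^{\ast}$ is valid (it correctly exploits the equality case (\ref{def:SNE2}) of $Q$ and anchors at a point of $\mathcal{V}_{k}^{\ast}$ supplied by (\ref{vstark_nevide})), but it is dispensable extra work rather than a necessary step.
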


\begin{proof}
For the first part we use Lemma \ref{vstarinf_nevida}. Let $z\in{\mathcal{V}%
}_{\infty}^{\ast}$. From the definition (\ref{vstarinf}) of ${\mathcal{V}%
}_{\infty}^{\ast}$ we get that for all $k\geq0$%
\begin{equation}
C_{k}(z)=z
\end{equation}
and that%
\begin{equation}
z-\Delta\in LSS(A;b)),
\end{equation}
which is equivalent to
\begin{equation}
z\in\Delta+LSS(A;b)),
\end{equation}
which, from (\ref{23}), implies that%
\begin{equation}
z\in\operatorname{Fix}(Q),
\end{equation}
thus $z\in F$. It follows that ${\mathcal{V}}_{\infty}^{\ast}\subseteq F$ and,
since ${\mathcal{V}}_{\infty}^{\ast}$ is nonempty, that also $F\neq\emptyset$.

To prove Condition \ref{ipoteza_suplimentara} we use Lemma \ref{incluziune}.
For an arbitrarily fixed $\ell\geq1$ and $k(\ell)$ from (\ref{conditie}) we
choose $y\in F$, $k\geq k(\ell)$ and $z=Q(x^{k})$, from (\ref{algoritm}).
Using (\ref{conditie}), the fact that $F\subseteq Im(C_{k+1})$ and Lemma
\ref{incluziune} we get%
\begin{equation}
\Vert C_{k+1}(Q(x^{k}))-y\Vert~~\leq~~\Vert C_{\ell}(Q(x^{k}))-y\Vert,
\end{equation}
which completes the proof. 
\smartqed \end{proof}

In conclusion, according to Proposition \ref{prop:Tx+Rb}, Lemma
\ref{egalitate_multimi}, Proposition \ref{prop:Ck} and Theorem
\ref{convergenta}, we may solve the linear least squares problem
(\ref{eq:LSS}) using Algorithm \ref{alg:conalg} with $Q$ defined by
(\ref{operatorQ}), when $T$ and $R$ matrices have the properties
(\ref{pr1})--(\ref{pr5}) and a family $\{C_{k}\}_{k=0}^{\infty}$ of box
constraining operators like those in (\ref{ex1}) satisfying the properties
(\ref{vstark_nevide}) and (\ref{conditie}).\medskip

\begin{acknowledgements} 
We thank Gabor Herman for valuable comments and the anonymous referees for their sugguestions which helped to improve the first version of the manuscript.
The work of Y.C. was partially supported by grant
number 2009012 from the United States-Israel Binational Science Foundation
(BSF) and by US Department of Army award number W81XWH-10-1-0170. 
\end{acknowledgements}

\end{document}